\documentclass[12pt,a4paper]{amsart}
\usepackage[english]{babel}
\usepackage{amssymb}
\usepackage{graphicx}
\usepackage{geometry}
\DeclareGraphicsRule{.tif}{png}{.png}{`convert #1 `basename #1 .tif`.png}
\usepackage{color}
\usepackage{epsfig}
\usepackage{subfigure}
\usepackage{tikz}
\usetikzlibrary{positioning}
\usepackage{caption}
\usepackage{hyperref}  
\hypersetup{colorlinks=true, linkcolor=blue, anchorcolor=blue, 
citecolor=red, filecolor=blue, menucolor=blue,
urlcolor=blue}

\numberwithin{equation}{section}
\newtheorem{theorem}{Theorem}[section]

\newtheorem{proposition}[theorem]{Proposition}
\newtheorem{lemma}[theorem]{Lemma}
\theoremstyle{remark}
\newtheorem{remark}{Remark}[section]

\theoremstyle{definition}


\newcommand{\R}{{\mathbb R}}
 
\newcommand{\Z}{{\mathbb Z}}
\newcommand{\C}{{\mathbb C}}

\newcommand{\PP}{\mathcal{P}_k}

\newcommand{\sol}{e^{-it\mathcal{D}_\nu}u_0}

\newcommand{\D}{\mathcal{D}_\nu}

\def\XXint#1#2#3{{\setbox0=\hbox{$#1{#2#3}{\int}$ }
\vcenter{\hbox{$#2#3$ }}\kern-.58\wd0}}

\makeatletter
\@namedef{subjclassname@2020}{
Mathematics Subject Classification}
\makeatother
\newcommand{\msc}[1]{\href{https://mathscinet.ams.org/mathscinet/search/mscdoc.html?code=#1}{#1}}

\begin{document}

\title
[Asymptotic  for the wave functions of the Dirac-Coulomb operator]
{Asymptotic estimates for the wave functions of the Dirac-Coulomb operator and applications}
\begin{abstract}
In this paper we prove some uniform asymptotic estimates for confluent hypergeometric functions making use of the steepest-descent method. As an application, we obtain Strichartz estimates that are $L^2$-averaged over  angular direction for the massless Dirac-Coulomb equation in $3D$.
\end{abstract}
\keywords{Dirac-Coulomb equation; Strichartz estimates; steepest descent method}
\subjclass[2020]{\msc{35Q41}}

\author{Federico Cacciafesta}
\address{Federico Cacciafesta: 
Dipartimento di Matematica, Universit\'a degli studi di Padova, Via Trieste, 63, 35131 Padova PD, Italy}
\email{cacciafe@math.unipd.it}

\author{\'Eric S\'er\'e }
\address{\'Eric S\'er\'e: CEREMADE, UMR CNRS 7534, Universit\'e Paris-Dauphine, PSL Research University,
Pl. de Lattre de Tassigny,
75775 Paris Cedex 16, France}
\email{sere@ceremade.dauphine.fr}

\author{Junyong Zhang}
\address{Junyong Zhang:
Department of Mathematics, Beijing Institute of Technology, Beijing 100081; 
}
\email{zhang\_junyong@bit.edu.cn
}

\maketitle

\tableofcontents

\section{Introduction}

In this paper we study the flow of the $3$d massless Dirac-Coulomb equation, that is the following first-order system
\begin{equation}\label{diraccoul}
\begin{cases}
\displaystyle
 i\partial_tu=\mathcal{D}_\nu u\,,\quad u(t,x):\mathbb{R}_t\times\mathbb{R}_x^3\rightarrow\mathbb{C}^{4}\\
u(0,x)=u_0(x)
\end{cases}
\end{equation}
where $$\mathcal{D}_\nu=\mathcal{D}-\frac{\nu}{|x|}\; ,\ \ 
\mathcal{D}=-i\displaystyle\sum_{k=1}^3\alpha_k\partial_k=-i(\alpha\cdot\nabla)
$$ 
\begin{equation}
\alpha_k=\left(\begin{array}{cc}0 & \sigma_k \\\sigma_k & 0\end{array}\right),\quad k=1,2,3.
\end{equation}
Here, $\sigma_k\; (k=1,2,3)$ are the Pauli matrices
\begin{equation}\label{sigma}
\sigma_1=\left(\begin{array}{cc}0 & 1 \\1 & 0\end{array}\right),\quad
\sigma_2=\left(\begin{array}{cc}0 &-i \\i & 0\end{array}\right),\quad
\sigma_3=\left(\begin{array}{cc}1 & 0\\0 & -1\end{array}\right)\,.
\end{equation}

It is now well understood that the charge needs to satisfy the condition $|\nu|\leq 1$ in order for the operator $\mathcal{D}_\nu$ to have a physically meaningful self-adjoint realization in the Hilbert space $L^2(\R^3,\C^4)$. To be more precise, $\mathcal{D}_\nu$ defined on the minimal domain $\mathcal C^\infty_c(\R^3\setminus\{0\},\C^4)$ happens to be essentially self-adjoint in the range $|\nu|\leq \frac{\sqrt3}2$, and admits a distinguished self-adjoint extension in the range $\frac{\sqrt3}2<|\nu|\leq1$ (see \cite{estlewser} and references therein).

From the point of view of dispersive analysis, system \eqref{diraccoul} is quite delicate, as indeed the Coulomb potential is {\em critical} with respect to the scaling of the massless Dirac equation and, as it is well known, scaling critical perturbations can be very difficult to handle, as they typically rule out perturbative arguments. Dispersive estimates for the Dirac equation with subcritical potential perturbations have been proved e.g. in \cite{erdgreen}, \cite{erdgreen2}, \cite{danfan}, \cite{bousdanfan}, \cite{cacdan}. We refer to \cite{cacser} for a short overview of the topic; in that paper the following local smoothing estimates were proved for the solutions to \eqref{diraccoul}
\begin{equation}\label{mor}
\displaystyle
\left\||x|^{-\alpha}\left|\mathcal{D}_\nu\right|^{1/2-\alpha} u\right\|_{L^2_tL^2_x}\leq C \|u_0\|_{L^2}
\end{equation}
for any $u_0\in L^2$ and $\frac12<\alpha<\sqrt{1-\nu^2}+\frac12$, but this estimate is not strong enough to obtain Strichartz estimates by relying on the standard Duhamel argument. In the spirit of \cite{burq1}, the main idea in the proof in \cite{cacser} relies on the use of partial wave decomposition and on the construction of a ``relativistic (or distorted) Hankel transform" (these tools will be recalled in subsection \ref{pwdsub}). This transform allows to ``diagonalize" the problem on a fixed spherical level, and this allows to obtain local smoothing estimates after a careful analysis of some integrals involving suitable products of generalized eigenfunctions. This same strategy has been later developed in \cite{cacfan} to obtain similar estimates for the Dirac equation in an Aharonov-Bohm (AB) field. We should stress the fact that the Dirac-Coulomb equation turns out to be significantly more difficult than the AB case, due to the much more complicated structure of the eigenfunctions, as we will see later. Notice that for all these results it is crucial to assume the mass to be zero, as the presence of a positive mass would destroy the scaling of the system. 

\medskip

In order to prove Strichartz estimates with angular regularity for the solutions to \eqref{diraccoul}, we are going to adapt the machinery developed in \cite{miao} in another context: the wave equation with inverse square potential (see also \cite{caczhaab} for an adaptation to the Dirac equation in the (AB) field). The arguments can be summarized in the following steps:

\begin{enumerate}
\item Use partial wave decomposition to reduce the dynamics to a radial system;
\item Use the relativistic Hankel transform built in \cite{cacser} to obtain a ``nice" integral representation of the solution based on the generalized eigenfunctions of the operator $\mathcal{D}_\nu$;
\item Prove suitable pointwise estimates on the generalized eigenfunctions;
\item Deduce Strichartz estimates.
\end{enumerate}

Steps $(1)$ and $(2)$ have already been dealt with: the former is completely classical, while the latter has been introduced in \cite{cacser}, so we will only need to review them in section \ref{preliminaries}. Step $(3)$ is the one that requires most of the technical work. Indeed, one of the main ingredients in the proof of \cite{miao} consists in providing suitable estimates on the generalized eigenfunctions of the operator $-\Delta+\frac{a}{|x|^2}$, which are known to be standard Bessel functions, and for them the following precise estimate is available for $\lambda\gg1$:
\begin{eqnarray}\label{estbes}
|J_\lambda(r)|\leq C\times
\begin{cases}
e^{-D\lambda},\qquad\qquad\qquad\qquad\qquad\; 0<r\leq \lambda/2,\\
\lambda^{-1/4}(|r-\lambda|+\lambda^{1/3})^{-1/4},\qquad \lambda/2<r\leq 2\lambda,\\
r^{-1/2},\qquad\qquad\qquad\qquad\qquad\; 2\lambda<r
\end{cases}
\end{eqnarray}
 for some positive constants $C$ and $D$ independent of $r$ and $\lambda$ (for this estimate see e.g. \cite{cor}-\cite{barcor}-\cite{stempak} ).
In the Dirac-Coulomb case on the other hand, the expression of the eigenfunctions is much more complicated and involves confluent hypergeometric (or Whittaker) functions (we postpone the overview of the spectral theory of the Dirac-Coulomb operator to section \ref{preliminaries}); to the very best of our knowledge, an analog of estimate \eqref{estbes} has not been proved for the confluent hypergeometric functions, and we will therefore need to provide one.

Finally, for what concerns step $(4)$ of the strategy above, we will follow the argument from \cite{miao}, that can be again decomposed in the following sub-steps:

\begin{enumerate}
\item Prove Strichartz estimates on a fixed angular level and with unit frequency;
\item Deduce Strichartz estimates for the complete dynamics using a scaling argument and a dyadic decomposition.
\end{enumerate}

The first result of this manuscript, which is of independent interest, is thus the analog of estimates \eqref{estbes} for the generalized eigenfunctions $\psi_k(\rho)$ of the massless Dirac-Coulomb operator $\mathcal{D}_\nu$. A precise definition of these functions will be given in section \ref{preliminaries}.
We will prove the following

\begin{theorem}\label{esteigen}
Given $\nu\in [-1,1]$ and $k\in\Z^*$, let $\gamma=\sqrt{k^2-\nu^2}$ and consider the generalized eigenfunction $\psi_{k}=\left(\begin{array}{cc}F_{k}\\\:G_{k}
\end{array}\right)$ of $\mathcal{D}_\nu$ with eigenvalue $E=1$, given by formula \eqref{eigen1} of section \ref{preliminaries}. Then there exist positive constants $C\,,\,D$ independent of $k\,,\,\nu$ such that the following pointwise estimate holds for all $\rho\in \R\backslash\{0\}$: 
\begin{equation}\label{esteigformula}
 j_{0,k}(\rho) + j_{1,k}(\rho) \leq C
\begin{cases}
(\min\{|\rho|/2\,,\,1\})^{\gamma-1}e^{-D\vert k \vert}\,,\qquad\quad\, 0<|\rho|\leq \max\{\vert k \vert/2\,,\,2\},\\
\vert k \vert^{-\frac34}\big(|\,\vert k \vert-|\rho|\,|+\vert k \vert^{\frac13}\big)^{-\frac14},\quad\quad \frac{\vert k \vert}2\leq |\rho|\leq 2\vert k \vert,
\\
|\rho|^{-1}
,\qquad\qquad\qquad\qquad\qquad\quad\quad\quad\;\,\, |\rho|\geq 2\vert k \vert,
\end{cases}
\end{equation}
where
\begin{equation}\label{j}
j_{0,k}(\rho):=|\psi_k(\rho)|\ \hbox{ and }\  j_{1,k}(\rho):=|\psi'_k(\rho)-(\gamma-1)\rho^{-1}\psi_k(\rho)|\,.
\end{equation}
\medskip

\end{theorem}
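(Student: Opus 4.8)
The plan is to obtain the generalized eigenfunction $\psi_k$ in closed form through confluent hypergeometric (Whittaker) functions---as recalled from the spectral theory in Section~\ref{preliminaries}---and then to reduce the two quantities $j_{0,k}(\rho)$ and $j_{1,k}(\rho)$ to suitable integral representations amenable to the steepest-descent (saddle-point) method, treating $|k|$ as the large parameter and tracking uniformity in both $|k|$ and $\nu$. Since the eigenvalue is fixed at $E=1$, the relevant confluent hypergeometric function will be of the form ${}_1F_1(\gamma + i\theta; 2\gamma; 2i\rho)$ (with $\theta$ a bounded function of $\nu,k$), and the natural starting point is its Laplace-type contour integral representation. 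The large-parameter regime is governed by $\gamma = \sqrt{k^2-\nu^2}$, which is comparable to $|k|$; the combination $j_{1,k}$ is designed precisely so that the $(\gamma-1)\rho^{-1}$ weight cancels the leading singular prefactor $\rho^{\gamma-1}$ coming from the Whittaker normalization, so that after factoring out $\rho^{\gamma-1}$ one is left in both cases with an entire function of $\rho$ whose growth must be controlled uniformly.

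First I would record the explicit form of $\psi_k = (F_k, G_k)^T$ from \eqref{eigen1}, isolate the scalar confluent hypergeometric factor, and rewrite $j_{0,k}$ and $j_{1,k}$ as (moduli of) integrals of the shape $\rho^{\gamma-1}\int_{\mathcal{C}} e^{\rho\, \phi(s)} a(s)\,ds$ over an appropriate contour $\mathcal{C}$, where $\phi$ depends on the ratio $\gamma/\rho$. Then I would split into the three regimes of \eqref{esteigformula} according to the position of $|\rho|$ relative to $|k|$:
\begin{enumerate}
\item \emph{The exponentially small regime $0<|\rho|\lesssim \max\{|k|/2,2\}$}: here there is no real saddle point on the contour; the phase $\phi$ has no critical point in the relevant region, so deforming the contour and estimating gives a bound of the form $(\min\{|\rho|/2,1\})^{\gamma-1}e^{-D|k|}$, the prefactor coming from the $\rho^{\gamma-1}$ normalization and the truncation of the integral, the exponential gain from the distance of the contour to any stationary point.
\item \emph{The turning-point (Airy) regime $|k|/2\le|\rho|\le 2|k|$}: here two saddle points of $\phi$ coalesce when $|\rho|=|k|$, which is exactly the source of the $|k|^{-3/4}(|\,|k|-|\rho|\,|+|k|^{1/3})^{-1/4}$ behavior; a uniform stationary-phase/Airy analysis (Chester--Friedman--Ursell type, or comparison with the known Airy asymptotics of Whittaker functions) yields the $|k|^{-1/4}$-type decay away from the turning point and the $|k|^{-1/3}$ regularization at it.
\item \emph{The oscillatory regime $|\rho|\ge 2|k|$}: two well-separated simple saddle points contribute; a standard non-degenerate steepest-descent estimate gives the $|\rho|^{-1}$ bound (the $|\rho|^{-1}$ rather than $|\rho|^{-1/2}$ reflecting the combined decay of the normalization $\rho^{\gamma-1}$ and the $\rho^{-1/2}$ from stationary phase, together with the $\rho^{\gamma-1}$ factored out earlier---this bookkeeping must be done carefully).
\end{enumerate}

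In each regime the estimate for $j_{1,k}$ is handled in parallel: differentiating the integral representation brings down a factor that, combined with the $-(\gamma-1)\rho^{-1}$ correction, again produces an integral with the same phase $\phi$ but a modified (still bounded, uniformly in $k,\nu$) amplitude, so the same saddle-point analysis applies with no loss. The matching of the three estimates on the overlapping boundaries $|\rho|\sim|k|/2$ and $|\rho|\sim 2|k|$ is routine once the constants are tracked. \textbf{The main obstacle} I anticipate is the uniform control in the turning-point regime: making the Airy-type asymptotics uniform simultaneously in $\rho$ near the turning point \emph{and} in the parameters $k,\nu$ (in particular down to the degenerate endpoint $|\nu|=1$, $k=\pm1$, where $\gamma$ can be as small as $0$, so that ``$|k|$ large'' and ``$\gamma$ large'' must be decoupled and the case of small $|k|$ handled separately by crude bounds) requires a careful choice of contour and a quantitative remainder estimate in the steepest-descent expansion, rather than a soft asymptotic statement. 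The secondary difficulty is purely organizational: keeping the singular prefactor $\rho^{\gamma-1}$ and the various $\rho$-powers from differentiation and stationary phase correctly balanced so that the final exponents in \eqref{esteigformula} come out exactly as stated.
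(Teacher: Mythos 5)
Your plan identifies the right starting point (the Whittaker/Laplace-type contour integral representation of the confluent hypergeometric factor) and the right tool (steepest descent with uniformity in $k,\nu$), and you correctly anticipate both the coalescence-of-saddles difficulty near $|\rho|\sim|k|$ and the need to decouple the degenerate cases $|k|=1$, $|\nu|$ close to $1$. This is indeed the strategy of the paper. However, your description of the large-$\rho$ regime is wrong in a way that hides the hardest part of the argument, and this constitutes a genuine gap.

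You say that for $|\rho|\geq 2|k|$ ``two well-separated simple saddle points contribute,'' so a standard non-degenerate steepest-descent estimate gives $|\rho|^{-1}$. In fact, writing the phase as $h_q(z)=-iz+q\ln(1-z^2)$ with $q=(\gamma-1)/\rho$, the saddles for $0<q<1$ are $z^0_\pm=\pm\sqrt{1-q^2}-iq$, and as $\rho\to\infty$ (i.e.\ $q\to0$) these converge to the \emph{branch points} $\pm1$ of $h_q$. The saddle points never separate away from the singularities; the second derivative $h_q''(z^0_\pm)=\tan\theta_0\,e^{\pm i(\pi/2-\theta_0)}$ blows up, and the ``amplitude'' $g_\varepsilon$ is evaluated arbitrarily close to its own branch points. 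There is no ``standard non-degenerate steepest-descent estimate'' available here; the paper has to rescale the contour parameter near $\theta_0$ by a factor $q$ and carry out a two-variable Taylor expansion in $(q,u)$ to obtain a uniform Gaussian bound, and this rescaling is what produces the extra $q$-factor that converts the naive $\rho^{-1/2}$ gain into the stated $\rho^{-1}$. Separately, for $\rho$ so large that $q\lesssim\rho^{-1/2}$ (i.e.\ $\rho\geq(\gamma+1)^2/2$), the paper abandons the saddle-point curves of $h_q$ altogether and integrates over the two vertical rays from $\pm1$ (the steepest-descent curves of $h_0$), proving the bound by a direct Gamma-function computation plus Stirling; your bookkeeping claim that ``$|\rho|^{-1}$ comes from $\rho^{\gamma-1}$ times a $\rho^{-1/2}$ from stationary phase, together with the $\rho^{\gamma-1}$ factored out earlier'' is inconsistent and does not reproduce this. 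Finally, a minor point: for the turning-point regime you propose appealing to a Chester--Friedman--Ursell uniform Airy expansion; the paper deliberately avoids this in favor of a hands-on piecewise-affine replacement of the descent curve near the coalescing saddles, which is simpler to make quantitative and uniform in $(k,\nu)$. Either could in principle work, but you should be aware that CFU as usually stated is an asymptotic expansion, and extracting from it the explicit, uniform, non-asymptotic inequality \eqref{esteigformula} with tracked constants would itself require nontrivial effort.
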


\begin{remark}
As a direct consequence of \eqref{esteigformula}, one gets an estimate for $|\psi_k'(\rho)|\,$ (with a possibly larger $C$ and a smaller $D>0\,$, both independent of $k$, $\nu$):
\begin{equation}\label{esteigformula'}
 |\psi_k'| \leq C
\begin{cases}
(\min\{|\rho|/2\,,\,1\})^{\gamma-2}e^{-D\vert k \vert}\,,\qquad\quad\, 0<|\rho|\leq \max\{\vert k \vert/2\,,\,2\},\\
\vert k \vert^{-\frac34}\big(|\,\vert k \vert-|\rho|\,|+\vert k \vert^{\frac13}\big)^{-\frac14},\quad\quad \frac{\vert k \vert}2\leq |\rho|\leq 2\vert k \vert,
\\
|\rho|^{-1}
,\qquad\qquad\qquad\qquad\qquad\qquad\quad\;\;\,\, |\rho|\geq 2\vert k \vert.
\end{cases}
\end{equation}
\end{remark}

\begin{remark}\label{remark1}
Our proof is different from the one of estimate \eqref{estbes} as given in \cite{cor}-\cite{barcor}. In that case an integral formula for Bessel functions with an oscillating integrand was used, allowing an application of the Van der Corput method. The integral formulation for $\psi_k(\rho)$ is more difficult to deal with, and we will rely on the steepest descent method to deal with large values of $\vert\rho\vert$ and $\vert k \vert$. For the reader's convenience, we give a brief description of this method in section \ref{method}. Let us just mention here that the steepest descent method, in its simplest version, typically provides exact asymptotic formulas for integrals depending on one large parameter. We are not exactly interested in such formulas in the present work: instead, we look for uniform estimates valid for all values of the three parameters $\rho,\,k,\,\nu$ and having the same accuracy as the known estimates on Bessel functions. This goal is achieved: in the limit $\nu\rightarrow 0$, the functions $\psi_k$ reduce to the Bessel functions (with proper weights and coefficients), and the estimates proved in Theorem 1.1 retrieve estimates (1.5). Let us also point out that one could write exact asymptotic formulas for $\psi_k(\rho)$ valid when $\vert\rho\vert\to\infty$ with $\frac{\vert k\vert }{\vert\rho\vert}$ fixed, by modifying slightly our arguments. One would then see that our estimates of polynomial decay are optimal up to a multiplicative constant when $\frac{\vert k\vert }{\vert\rho\vert}\leq 1 $. However, if $\vert k\vert > \rho$ and $\frac{\vert k\vert }{\vert\rho\vert}$ stays away from $1$, then $\psi_k(\rho)$ decreases exponentially, as mentioned in Subcase (1.a) of our proof. The statement of Theorem 1.1 is not optimal in this sector: the rate of exponential decay is estimated roughly in the sector $\vert\rho\vert<\frac{\vert k\vert}{2}$ and does not even appear in the sector $\frac{\vert k\vert}{2}\leq \vert\rho\vert< \vert k\vert$. But better bounds in the domain $\vert k\vert > \rho$ would not improve our Strichartz estimates, that are the final purpose of this paper.
\end{remark}

As an application of Theorem \ref{esteigen}, by following the aforementioned strategy, we are able to obtain Strichartz estimates that are $L^2$-averaged over angular direction for solutions to \eqref{diraccoul} for general initial data.  Before stating the result, let us mention that we will use standard notations for Lebesgue and Sobolev spaces; if not specifically indicated, the norms will be intended on the whole space (i.e. $L^p_t=L^p_t(\R)$ and $L^q_x=L^q_x(\R^3)$), and we will systematically omit the dimension on the target space. We will denote with $L^p_tL^q_x=L^p(\mathbb{R}_t; L^q(\mathbb{R}^3_x))$ the mixed space-time Strichartz spaces. Using the polar coordinates $x=r\omega$, $r\geq0$, $\omega\in S^2$, and given a measurable function $F=F(t,x)$ we will denote by
$$
\|F\|_{L^p_tL^q_{r^2dr}L^2_\omega}
:=
\left(\int_{\R}\left(\int_{0}^{+\infty}\left(\int_{ S^2}|F(t,r,\omega)|^2\,d\sigma\right)^{\frac q2}\,r^{2}dr\right)^{\frac pq}\,dt\right)^{\frac1p},
$$
being $d\sigma$ the surface measure on the sphere. We then have the following

\begin{theorem}[Strichartz estimates]\label{strichteo}
Let $|\nu|<\frac{\sqrt{15}}4$. For any $u_0\in \dot H^s$, the following Strichartz estimates hold
\begin{equation}\label{strinonend}
\|\sol\|_{L^p_tL^q_{r^2dr}L^2_\omega}\leq
C
\|u_0\|_{\dot{H}^{s}},
\end{equation}
provided that 
\begin{equation}\label{q-condbis}
p\geq2, \quad 4<q<\frac{3}{1-\sqrt{1-\nu^2}},\quad s=\frac32-\frac1p-\frac3q\,.
\end{equation}

\end{theorem}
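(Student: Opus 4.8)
The plan is to follow the four-step strategy outlined in the introduction, now that Theorem \ref{esteigen} provides the required pointwise control on the generalized eigenfunctions. First I would use the partial wave decomposition to split $u_0 = \sum_k u_k$ into spherical components on which $\mathcal{D}_\nu$ acts as a one-dimensional radial operator $\mathcal{D}_{\nu,k}$, and then apply the relativistic Hankel transform from \cite{cacser} to write $e^{-it\mathcal{D}_{\nu,k}}u_k$ as an explicit integral over the spectral parameter $\lambda$ against the eigenfunctions $\psi_k(\lambda\rho)$. Because the problem is scaling-invariant (massless, Coulomb potential is scaling-critical), it suffices to prove the fixed-frequency, fixed-angular-level estimate: for $u_k$ frequency-localized near $|\lambda|\sim 1$, one wants
\begin{equation}\label{plan-fixed}
\|e^{-it\mathcal{D}_{\nu,k}}u_k\|_{L^p_tL^q_{r^2dr}}\leq C\,|k|^{\beta}\,\|u_k\|_{L^2_{r^2dr}}
\end{equation}
for a suitable power $\beta=\beta(p,q)$ that will be summable after reinserting the $L^2_\omega$ norm over the angular modes and paying $s-$derivatives; the loss of angular regularity comes precisely from this $|k|^\beta$ factor. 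Then a dyadic decomposition in frequency plus the scaling argument (exactly as in \cite{miao}) upgrades \eqref{plan-fixed} to the full estimate \eqref{strinonend} on $\dot H^s$.

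The heart of the matter is \eqref{plan-fixed}, which I would obtain by the standard $TT^*$ / stationary-phase route. Writing the kernel of $e^{-it\mathcal{D}_{\nu,k}}(e^{it'\mathcal{D}_{\nu,k}})^*$ as an oscillatory integral $\int e^{i(t-t')\lambda}\psi_k(\lambda r)\psi_k(\lambda r')\,\chi(\lambda)\,d\lambda$, the dispersive decay is governed by the size of $\psi_k$. Here Theorem \ref{esteigen} enters decisively: in the oscillatory region $\tfrac{|k|}{2}\le|\rho|\le 2|k|$ the bound $|k|^{-3/4}(|\,|k|-|\rho|\,|+|k|^{1/3})^{-1/4}$ is exactly the analog of the Bessel bound \eqref{estbes} used in \cite{miao}, the exponentially small bound for $|\rho|\le |k|/2$ kills the contribution of small arguments, and the $|\rho|^{-1}$ decay for $|\rho|\ge 2|k|$ gives the free-like behavior at large argument; the companion estimate \eqref{esteigformula'} on $\psi_k'$ handles the integration-by-parts terms in the phase analysis. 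One then interpolates the resulting $L^1\to L^\infty$-type dispersive bound (in the appropriate weighted radial spaces) with $L^2$ conservation, tracks how the constants depend on $|k|$ — the $|k|^{-3/4}$ and $|k|^{1/3}$ scales in \eqref{esteigformula} produce a polynomial loss — and applies the Keel–Tao machinery to reach the admissible range in \eqref{q-condbis}. The upper bound $q<\tfrac{3}{1-\sqrt{1-\nu^2}}$ and the restriction $|\nu|<\tfrac{\sqrt{15}}{4}$ come from the constraint that the near-origin exponent $\gamma-1=\sqrt{k^2-\nu^2}-1$ (smallest at $k=\pm1$, giving $\gamma=\sqrt{1-\nu^2}$) be large enough for the local integrability/summability of the eigenfunction products in the weighted $L^q_{r^2dr}$ norm; one needs $q(1-\gamma)<3$ roughly, i.e. the worst angular mode still yields a convergent integral, and $|\nu|<\sqrt{15}/4$ is precisely what makes the relevant exponent admissible together with $q>4$.

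The main obstacle I anticipate is not the abstract Strichartz mechanism — that is by now routine once \eqref{estbes}-type bounds are in hand — but rather the bookkeeping of the $|k|$-dependence through the $TT^*$ argument and, crucially, checking that after summing over all angular modes $k\in\Z^*$ the total loss is absorbed by a \emph{fixed} (finite) number of angular derivatives, i.e. that $\|u_0\|_{\dot H^s}$ with angular regularity suffices and the series $\sum_k |k|^{2\beta}\|u_k\|^2$ converges. A secondary delicate point is the transition region $|\rho|\sim|k|$ where the Airy-type scale $|k|^{1/3}$ appears: one must verify that the oscillatory integral estimate there is uniform in $k$ and glues correctly with the other two regimes, and that the endpoint-type behavior near $q=4$ (excluded) and near the upper threshold for $q$ is consistent with the stated constraints \eqref{q-condbis}. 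Finally, one should double-check that the distinguished self-adjoint extension in the range $\tfrac{\sqrt3}{2}<|\nu|\le 1$ does not introduce boundary terms in the Hankel-transform integration by parts; since we only need $|\nu|<\sqrt{15}/4<1$ and this stays within the essentially self-adjoint regime when combined with the other constraints, this should cause no trouble, but it is worth a remark.
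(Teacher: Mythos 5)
Your overall skeleton (partial waves, relativistic Hankel transform, eigenfunction bounds, scaling/dyadic decomposition) matches the paper's, and your location of the threshold $|\nu|<\sqrt{15}/4$ is essentially right: it is exactly what makes $\sqrt{1-\nu^2}-1+\tfrac{3}{q}>0$ compatible with $q>4$ for the worst mode $k=\pm1$. But the engine you propose for the fixed-frequency estimate is genuinely different from the paper's, and one of your guiding assumptions is off.

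The paper never runs a $TT^*$/stationary-phase/Keel--Tao argument, and never produces an $L^1\to L^\infty$ dispersive bound for $e^{-it\mathcal{D}_{\nu,k}}$. It works only at the endpoint $p=2$: after Hankel transform, the time integral $\int e^{it\rho}\psi_k(r\rho)g_k(\rho)\rho^2\,d\rho$ is handled by Plancherel in $t$, which turns $L^2_t$ directly into $L^2_\rho$ and eliminates the oscillatory factor without any phase analysis. The radial $L^q_{r^2dr}$ norm is then controlled on each dyadic annulus $[R,2R]$ by Sobolev embedding $H^1\hookrightarrow L^\infty$ and interpolation, using the integral bounds $\bigl(\int_R^{2R}|\psi_k|^2 r^2dr\bigr)^{1/2}\lesssim R^{\gamma+1/2}$ (for $R\le1$) or $R^{1/2}$ (for $R\ge1$), and similarly for $\psi_k'$. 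Dyadic summation over $R$, orthogonality in the angular modes, and a Schur-test argument over the frequency scales $N$ finish the endpoint $p=2$ case; the range $p>2$ then follows by interpolating with the trivial $L^\infty_t\dot H^s$ bound. Your $TT^*$ route would require a pointwise kernel bound uniform in $k$ in the mixed norm $L^q_{r^2dr}L^2_\omega$, which is a substantially harder object than what is actually needed, and Keel--Tao does not apply directly to those mixed radial-angular spaces.

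The more serious conceptual gap is your expectation that the fixed-frequency estimate carries a polynomial loss $|k|^\beta$ that must be ``absorbed'' by paying angular derivatives and then summed. That is not how the loss of angular regularity manifests here. In the paper's Lemma on the Hankel transform the constants are \emph{uniform in $k$}: the role of $k$ is only to fix the exponent $\gamma=\sqrt{k^2-\nu^2}$ appearing in the small-$R$ power $R^{\gamma-1+3/q}$, and the admissibility condition \eqref{condK} is precisely what makes the dyadic sum over $R\le1$ converge uniformly for all $k\in\Z^*$. The ``loss of angular derivatives'' in the theorem refers to the norm $L^q_{r^2dr}L^2_\omega$ being weaker than $L^q_x$ (the angular exponent is stuck at $2$), not to any $|k|$-dependent constant. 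If you proceed expecting a $|k|^\beta$ loss to be compensated by angular Sobolev regularity in $\dot H^s$, you will be fighting a problem that does not arise, while missing the actual constraint that governs the range of $q$ and $\nu$.
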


\begin{remark}
The use of $L^p_{r^2dr}L^q_\omega$ spaces in order to obtain a "refined version" of Strichartz estimates is definitely not new (these are often referred to as {\em generalized Strichartz estimates)}; we mention at least \cite{Tao00}, in which the author retrieves the endpoint estimate for the $2D$ Schr\"odinger equation by averaging the solution in $L^2$ in the angular variable, then \cite{sterbenz} and \cite{mach} in which generalized estimates are obtained for the wave and the Dirac equation respectively. 
In particular, we should stress the fact that \eqref{strinonend} implies standard Strichartz estimates (i.e. in the spaces $L^p_t L^q_x$) for "radial" initial data (the precise meaning of "radial" in our context will be explained in the next section). Also, we mention that estimates \eqref{strinonend} might be used to prove well-posedness for some nonlinear models in a more or less standard way (by assuming "radial symmetry" on the initial data or by requiring additional angular regularity): we refer e.g. to \cite{mach}, \cite{cac1}, \cite{cacdan}.
\end{remark}

\begin{remark}
Let us comment on the constraints on the parameters $\nu$, $p$, $q$ in \eqref{q-condbis}. First of all, note that from the assumptions of the Theorem, the regularity parameter $s$ must lie in $(\frac14,\frac12+\sqrt{1-\nu^2})$. For what concerns the Strichartz pairs, we should stress the fact that we do not recover the full optimal range (compare e.g. with \cite{jiang}): in fact, our result could be slightly improved by sharpening our strategy in some steps at the price of additional technicalities (see next remark). Also, we should mention that condition \eqref{q-condbis} can be significantly relaxed by requiring some structure on the initial data $u_0$, that is to be "orthogonal to the first partial wave subspaces" (see next section for the definition). In particular, this assumption allows to remove the upper bound on $q$ in \eqref{q-condbis} (and thus the necessary condition $|\nu|<\frac{\sqrt{15}}4$). This fact will be further motivated towards the end of the paper (see formulas \eqref{4.12} and \eqref{condK}).
\end{remark}

\begin{remark}\label{strichartzrange}
Let us also briefly comment on the strategy of our proof: we will prove our Strichartz estimates on the "endpoint board line", that is for the choice $p=2$. Then, by interpolating with the standard $L^\infty_t H^s_x$ estimates, we will be able to cover the
range of parameters satisfying the assumptions of Theorem \ref{strichteo}. It is reasonable to expect that providing a direct proof in the case $L^p_t$ would allow to improve on the range of admissible exponents, but this would require a fair additional amount of technicalities that we prefer to avoid.
The upper bound $|\nu|<\frac{\sqrt{15}}4$ is a consequence of the necessary condition \eqref{q-condbis}, and it seems to have no physical meaning; it is again a byproduct of our proof. Notice anyway that $\frac{\sqrt{15}}4>\frac{\sqrt3}2$, and thus this range includes the set of charges that make the Dirac-Coulomb operator essentially self-adjoint.  \end{remark}

The structure of the paper is the following: section \ref{preliminaries} will be devoted to introduce the necessary setup, that is a review of the partial wave decomposition, of the spectral theory of the Dirac-Coulomb operator and of the method of the steepest descents. Section \ref{esteigesec} will be devoted to the proof of Theorem \ref{esteigen}: as we will see, several cases and sub-cases need to be considered in order to check every detail, and as a result the proof turns out to be quite long and slightly involved at some stages. Section \ref{strichsec} is devoted to the proof of Theorem \ref{strichteo}.

\medskip

\section{Preliminaries}\label{preliminaries}

We devote this section to review the necessary background needed for our main results.

\subsection{Partial wave decomposition and generalized Hankel transform}\label{pwdsub}

In this section we recall some classical facts already discussed in \cite{cacser} on the spectral properties of the Dirac-Coulomb operator in $3$d, together with the construction of the relativistic Hankel transform, that will play a crucial role in what follows. 

 The first main ingredient we need to introduce is the so called \emph{partial wave decomposition}, that essentially allows to reduce the action of the Dirac-Coulomb operator to (a sum of) radial operators with respect to some suitable decomposition. Let us give a brief overview of the construction: we refer to \cite{thaller} section 4.6 for detail.

 First of all, we use spherical coordinates to write
$$L^2(\mathbb{R}^3,\mathbb{C}^4)\cong L^2((0,\infty),r^2dr)\otimes L^2(S^2,\mathbb{C}^4)$$ 
with $S^2$ being the unit sphere. Then, we have the orthogonal decomposition on $S^2$:
$$L^2(S^2,\mathbb{C}^4)\cong \bigoplus_{k\in \mathbb{Z}^*}\bigoplus_{m\in \mathcal{I}_k} \mathfrak{h}_{k,m}\;.$$ 
Here, $\mathbb{Z}^*=\mathbb{Z}\backslash\{0\}$, $\mathcal{I}_k:=\{-\vert k\vert+1/2,-\vert k\vert+3/2,\cdots, \vert k\vert-1/2\}\subset \mathbb{Z}+1/2$ and each subspace $\mathfrak{h}_{k,m}$ is two-dimensional, with orthonormal basis
\begin{equation*}
\Xi_{k,m}^+=\left(\begin{array}{cc}i\, \Omega_{k,m} \\0\end{array}\right),\qquad
\Xi_{k,m}^-=\left(\begin{array}{cc}0 \\ \Omega_{-k,m}\end{array}\right)\;.
\end{equation*}
The functions $\Omega_{k,m}$ can be explicitly written in terms of standard spherical harmonics as
$$
\Omega_{k,m}=\frac{1}{\sqrt{|2k+1|}}\left(\begin{array}{cc}\sqrt{|k-m+1|}\,Y^{m-1/2}_{|k+1/2|-1/2} \\ {\rm sgn} (-k)\sqrt{|k+m+1|}\,Y^{m+1/2}_{|k+1/2|-1/2}\end{array}\right)\;.
$$ 
We thus have the unitary isomorphism $$L^2(\mathbb{R}^3,\mathbb{C}^4)\cong \bigoplus_{\substack{k\in \mathbb{Z}^*\\ m\in \mathcal{I}_k}} L^2((0,\infty),r^2dr)\otimes  \mathfrak{h}_{k,m}$$ given by the decomposition
\begin{equation}\label{explicitiso}
\Phi(x)=\sum_{k\in \mathbb{Z}^*}\sum_{m\in \mathcal{I}_k} f^+_{k,m}(r)\Xi_{k,m}^+(\theta,\phi)+f_{k,m}^-(r)\Xi_{k,m}^-(\theta,\phi)
\end{equation}
which holds for any $\Phi\in L^2(\R^3,\C^4)$.
The Dirac-Coulomb operator leaves invariant the partial wave subspaces $\mathcal{C}^\infty_c(0,\infty)\otimes \mathfrak{h}_{k,m}$ and its action on each column vector of radial functions $f_{k,m}=(f^+_{k,m},f^-_{k,m})^{\rm T}$ is given by the radial matrix
\begin{equation}\label{radred}
\mathcal{D}_{\nu,k}=\left(\begin{array}{cc}-\frac\nu{r} & -\frac{d}{dr}+\frac{1+k}r \\\frac{d}{dr}-\frac{1-k}r & -\frac\nu{r}\end{array}\right).
\end{equation}
This isomorphism allows for the following decomposition of the dynamics of the Dirac flow: for any $k\in \Z^*$ the choice of an initial condition as 
\begin{equation*}
u_{0,k,m}(x)=
f^+_{0,k,m}(r)\Xi^+_{0,k,m}(\theta,\phi)+
 f^-_{0,k,m}(r)
\Xi^-_{0,k,m}(\theta,\phi)
\end{equation*}
implies, by Stone's Theorem, that the propagator is given by
\begin{equation*}
 e^{-it\mathcal{D}_\nu}u_{0,k,m}=
 f^+_{k,m}(r,t)\Xi^+_{k,m}(\theta,\phi)+
f^-_{k,m}(r,t)
\Xi^-_{k,m}(\theta,\phi)
\end{equation*}
where
\begin{equation*}
\left(\begin{array}{cc} f^+_{k,m}(r,t)\\\
f^-_{k,m}(r,t)
\end{array}\right)=e^{-it\mathcal{D}_{\nu,k}}\left(\begin{array}{cc} f^+_{0,k,m}(r)\\
 f^-_{0,k,m}(r)
\end{array}\right).
\end{equation*}
In what follows, we will in fact use the shortened notation 
\begin{equation}\label{ppp3}
f \cdot \Xi_{ k,m}=f^+(r)\Xi^+_{k,m}(\theta,\phi)+
f^-(r)
\Xi^-_{k,m}(\theta,\phi)\,,\quad f(r)=(f^+(r), f^-(r))^{\mathrm{T}}.
\end{equation}
 
The second key tool we need is a suitable ``Hankel transform", that is a transformation that allows somehow to ``diagonalize" the action of the Dirac-Coulomb operator. Of course, one cannot use the standard Hankel transform: the generalized eigenstates are not Bessel functions, moreover positive and negative energy eigenstates are present and should be dealt with simultaneously. We thus define, for a fixed $k \in \mathbb{Z}^*$, a ``relativistic Hankel transform" of the form
\begin{equation}\label{H-1}
\mathcal{P}_k f(E)=\int_0^{+\infty}H_{k}(E r) f(r)r^{2}dr
\end{equation}
where $E\in (0,\infty)$ and, for any $\rho>0$, $H_{k}(\rho)=\left(\begin{array}{cc}F_{k}(\rho)\;\quad G_{k}(\rho)\\
F_{k}(-\rho)\;\quad G_{k}(-\rho)
\end{array}\right)\;.
$

The functions \begin{equation}\label{radeigpsi}
\psi_{k}(\pm E r)=
\left(\begin{array}{cc}F_{k}(\pm E r)\\\:G_{k}(\pm E r)
\end{array}\right)
\end{equation}
are the generalized eigenstates of the self-adjoint operator $\mathcal{D}_{\nu,k}$ with energies $\pm E$, so that
\begin{equation}\label{prophank}
\mathcal{P}_k \mathcal{D}_{\nu,k}=\mathrm{Diag}(E,-E)\mathcal{P}_k \,:
\end{equation}
in other words, the transform $\mathcal{P}_k$ ``diagonalizes" the operator $\mathcal{D}_{\nu,k}$ (see Proposition \ref{properties}).

\begin{remark}
The operator $\mathcal{D}_{\nu,k}$, its generalized eigenstates $\psi_{k}(\pm E r)$ and the transform $\mathcal{P}_k$ are independent of $m$.
\end{remark}

This construction suggests that the functions $\psi_{k}=\left(\begin{array}{cc}F_{k}\\\:G_{k}
\end{array}\right)$ play a crucial role, and most of the technical issues in our dispersive estimates will consist in proving suitable estimates for them. We therefore recall their precise definition, as given in  e.g. \cite{landlif2}, formulas (36.1)-(36-20): for fixed values of $k\in\Z^*$ and $\rho\in\mathbb{R}\backslash\{0\}$,
$F_k(\rho)$ and $G_k(\rho)$ are real and

\begin{equation}\label{eigen1}
(G_{k}+i F_{k})(\rho)=\frac{\sqrt{2}|\Gamma(\gamma+1+i\nu)|}{\Gamma(2\gamma+1)}e^{\pi\nu/2+i(\rho+\xi)}\vert 2\rho\vert^{\gamma-1}
{_1F_1}(\gamma-i\nu,2\gamma+1,-2i\rho)
\end{equation}
where $_1F_1(a,b,z)$ are \emph{confluent hypergeometric functions}, $\gamma=\sqrt{k^2-\nu^2}$  and $e^{-2i\xi}=\frac{\gamma-i\nu}{k}$ is a phase shift.

\medskip

One of the key tools of our strategy is represented by the following result, that has been proved in \cite{cacser}:
\begin{proposition}\label{properties}
For any $k\in\mathbb{Z}^*$ the following properties hold: 
\begin{enumerate}
\item
$\mathcal{P}_k$ is an $L^2$-isometry.
\item
$
\mathcal{P}_k\mathcal{D}_{\nu,k}=\sigma_3\Omega\mathcal{P}_k,
$
where $\Omega f(x):=|x| f(x)$. 
\item
The inverse transform of $\mathcal{P}_k$ is given by
\begin{equation}\label{H-1}
\mathcal{P}_k^{-1}f(r)=\int_0^{+\infty}H_{k}^{*}(E r) f(E)E^{2}dE
\end{equation}
where $H_{k}^{*}=\left(\begin{array}{cc}F_{k}(E r)\;\quad F_{k}(-E r)\\
G_{k}(E r)\;\quad G_{k}(-E r)
\end{array}\right)
$ (notice the misprint in formula \\(2.18) in \cite{cacser}).
\end{enumerate}
\end{proposition}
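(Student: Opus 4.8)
\emph{Plan of proof.} Properties (1)--(3) together say precisely that $\mathcal{P}_k$ is the generalized Fourier (spectral) transform attached to the self-adjoint radial operator $\mathcal{D}_{\nu,k}$ acting on $L^2((0,\infty),r^2dr;\C^2)$. Observe first that, since the entries of $H_k$ are real, the operator with kernel $H_k^*$ appearing in (3) is nothing but the formal adjoint $\mathcal{P}_k^*\colon L^2((0,\infty),E^2dE;\C^2)\to L^2((0,\infty),r^2dr;\C^2)$ of $\mathcal{P}_k$; hence (1) and (3) are jointly equivalent to the single assertion that $\mathcal{P}_k$ is unitary onto $L^2((0,\infty),E^2dE;\C^2)$, which amounts to the two resolution-of-identity relations
\begin{gather*}
\int_0^{\infty}H_k^*(Er)H_k(Er')\,E^2\,dE=\frac{\delta(r-r')}{r^2}\,\mathrm{Id}_2,\\
\int_0^{\infty}H_k(Er)H_k^*(E'r)\,r^2\,dr=\frac{\delta(E-E')}{E^2}\,\mathrm{Id}_2,
\end{gather*}
understood distributionally. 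The plan is therefore: (a) fix the self-adjoint realization of $\mathcal{D}_{\nu,k}$; (b) integrate the eigenvalue ODE and pin down the normalization in \eqref{eigen1}; (c) deduce the two closure relations, equivalently invoke the Weyl--Kodaira theorem for singular first-order systems; and, separately, (d) establish the intertwining (2).

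For step (a): $\mathcal{D}_{\nu,k}$ is in the limit-point case at $r=+\infty$ for all $k,\nu$, since every solution of $\mathcal{D}_{\nu,k}\psi=\lambda\psi$ is oscillatory of size $|\cdot|^{-1}$ there; at $r=0$ the two solutions behave like $\rho^{\gamma-1}$ and $\rho^{-\gamma-1}$ with $\gamma=\sqrt{k^2-\nu^2}$, so one is in the limit-point case when $\gamma\ge\tfrac12$ (essential self-adjointness) and in the limit-circle case when $0<\gamma<\tfrac12$, i.e. $|k|=1$ and $\tfrac{\sqrt3}2<|\nu|\le1$, in which case one selects the distinguished extension, whose domain is singled out by the boundary condition that annihilates the Dirac--Coulomb Wronskian at $0$. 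This is exactly the partial-wave shadow of the self-adjointness facts recalled in the Introduction (see \cite{estlewser}), so it may be quoted rather than reproved.

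For steps (b)--(c): by the homogeneity $\mathcal{D}_{\nu,k}[\psi(\lambda\,\cdot)](r)=\lambda(\mathcal{D}_{\nu,k}\psi)(\lambda r)$ it suffices to solve $\mathcal{D}_{\nu,k}\psi_k=\psi_k$; a change of unknown turns the radial Dirac--Coulomb system into Kummer's confluent hypergeometric equation, whose solution regular at the origin (behaving like $|2\rho|^{\gamma-1}$) is precisely the $\psi_k$ of \eqref{eigen1} --- the classical computation of \cite{landlif2}. Hence, for $\lambda>0$, the functions $\psi_k(\pm\lambda\,\cdot)$ are the regular generalized eigenfunctions of $\mathcal{D}_{\nu,k}$ with eigenvalues $\pm\lambda$. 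The scattering normalization of the constant prefactor is then read off from the large-$|\rho|$ asymptotics of ${}_1F_1(\gamma-i\nu,2\gamma+1,-2i\rho)$, which give $\psi_k(\rho)=|\rho|^{-1}(\text{oscillation})+o(|\rho|^{-1})$; feeding this into the Weyl--Kodaira spectral-density formula --- equivalently, evaluating the oscillatory integral in the first closure relation by stationary phase, which localizes it at $r=r'$ --- shows that the spectral matrix of $\mathcal{D}_{\nu,k}$ is a constant multiple of $E^2\,dE\cdot\mathrm{Id}_2$ (the $3$d radial momentum measure), and the constant $\tfrac{\sqrt2|\Gamma(\gamma+1+i\nu)|}{\Gamma(2\gamma+1)}e^{\pi\nu/2}$ together with the phase $e^{-2i\xi}=(\gamma-i\nu)/k$ in \eqref{eigen1} are exactly what normalize this constant to $1$. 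This yields both closure relations, hence (1) and (3). Property (2) is softer and handled separately: applying $\mathcal{D}_{\nu,k}$ under the integral sign in $\mathcal{P}_kf(E)$, one integration by parts transfers it onto $H_k(Er)$, whose two rows are eigenfunctions with eigenvalues $E$ and $-E$; this gives $\mathcal{P}_k\mathcal{D}_{\nu,k}f=\mathrm{Diag}(E,-E)\mathcal{P}_kf=\sigma_3\Omega\,\mathcal{P}_kf$, first on a dense core of the operator domain and then on the whole domain by the unitarity from (1).

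I expect the main obstacle to be the behaviour at $r=0$. The integration by parts behind (2), and the stationary-phase evaluation behind the closure relations, each leave a boundary term at the origin of Wronskian type; it vanishes automatically when $\gamma\ge\tfrac12$ because the $\rho^{\gamma-1}$ behaviour of $\psi_k$ beats the $r^2$ weight, but in the limit-circle range $0<\gamma<\tfrac12$ it vanishes only after restricting to the distinguished-extension domain, so the defining boundary condition must be carried through the entire argument --- and, for the later applications, with constants uniform in $k$. The remaining difficulty is purely computational: extracting the precise normalization of the spectral density from the confluent-hypergeometric asymptotics, which is the Coulomb analogue of the closure relation for Bessel functions.
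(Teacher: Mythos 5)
Your argument is sound, but note that this paper does not prove the proposition at all: it is quoted verbatim from \cite{cacser}, where the key input is the $\delta$-normalization $\int_0^{+\infty}\overline{\psi_k}(Er)\psi_k(E'r)\,r^2dr=\delta(E-E')E^{-2}$ of the generalized eigenstates, itself taken from the physics literature \cite{landlif2}; properties (1)--(3) then follow by the direct computations you describe (adjoint kernel $H_k^*$, integration by parts for the intertwining). What you propose is a self-contained derivation of that normalization via Weyl--Kodaira eigenfunction-expansion theory for the singular first-order system $\mathcal{D}_{\nu,k}$, and the skeleton is correct: the limit-point/limit-circle dichotomy at $r=0$ with threshold $\gamma=\tfrac12$ (i.e.\ $|k|=1$, $|\nu|>\tfrac{\sqrt3}2$) matches the self-adjointness discussion in the introduction, the scale-covariance reduces everything to $E=1$, and the equivalence of (1) and (3) with the two closure relations is standard. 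Two points would still need to be carried out to turn the plan into a proof. First, the entire quantitative content of the proposition sits in the step you label ``purely computational'': one must extract from the large-$\rho$ asymptotics of ${_1F_1}(\gamma-i\nu,2\gamma+1,-2i\rho)$ (including the Coulomb logarithmic phase) that the $2\times2$ spectral matrix equals $E^2\,dE\cdot\mathrm{Id}_2$ exactly, which is what fixes the prefactor $\sqrt2\,|\Gamma(\gamma+1+i\nu)|e^{\pi\nu/2}/\Gamma(2\gamma+1)$ and the phase $\xi$ in \eqref{eigen1}. Second, $\mathrm{Id}_2$ also encodes the vanishing of the off-diagonal entries, i.e.\ the mutual orthogonality (in the distributional sense) of the positive- and negative-energy branches $\psi_k(\pm Er)$; your sketch asserts the diagonal form of the spectral matrix but does not address this, and it is part of what makes $\mathcal{P}_k$ an isometry on the full two-component space rather than on each branch separately. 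With those two computations supplied, your route gives a complete and more self-contained proof than the citation used here.
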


As a consequence of this Proposition, given a function $\displaystyle u_0=\sum_{\substack{k\in \mathbb{Z}^*\\ m\in \mathcal{I}_k}} f_{0,k,m}\cdot {\Xi}_{ k,m}$ we can decompose the solution to equation \eqref{diraccoul} as follows:
\begin{equation}\label{repsol}
\sol=\sum_{\substack{k\in \mathbb{Z}^*\\ m\in \mathcal{I}_k}}(e^{-it\mathcal{D}_{\nu, k}}f_{0,k,m})\cdot {\Xi}_{ k,m}
=\sum_{\substack{k\in \mathbb{Z}^*\\ m\in \mathcal{I}_k}}\PP^{-1}\left[e^{-itE\sigma_3}\big(\PP f_{0,k,m}\big)(E)\right]\cdot{\Xi}_{ k,m}.
\end{equation}
This decomposition will represent the essential starting point of our analysis.

\medskip
To conclude with this section, we provide a result of equivalence of the norms induced by the fractional powers of the Dirac-Coulomb operator with standard Sobolev spaces: more precisely, we have the following
\begin{lemma}\label{confnorm}
Let $|\nu|<1$. Then the following inequalities hold 
\medskip
\begin{itemize}
\item $\||\D|^s f\|_{L^2}\leq C_1 \| f\|_{\dot{H}^s}$ for any $s\in\left[0,1\right]$,
\medskip
\item $\| f\|_{\dot{H}^s}\leq C_2\||\D|^s f\|_{L^2}$ for any $s\in[0,\frac12+\sqrt{1-\nu^2}]$.
\end{itemize}
\medskip
Here, $C_1$ and $C_2$ depend on $\nu$ but remain bounded when $|\nu|$ stays away from $1$.
\end{lemma}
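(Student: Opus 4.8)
The plan is to prove the two-sided equivalence by interpolation, reducing everything to a single endpoint estimate together with the trivial $s=0$ case. Since $|\D|^0 = \mathrm{Id}$ and $\|\cdot\|_{\dot H^0}=\|\cdot\|_{L^2}$, both inequalities hold with constant $1$ when $s=0$. At the other end, the key input is the \emph{heat-kernel or square-function comparison} between $|\D|$ and $|\D_0|=\sqrt{-\Delta}$ (here $\D_0$ denotes the free massless Dirac operator $\D$, whose modulus is $\sqrt{-\Delta}$ acting diagonally on $\C^4$). Concretely, I would first establish that for $s=1$ one has the pointwise-in-frequency bound $\||\D| f\|_{L^2}\le C\|\nabla f\|_{L^2}$: this follows because $\D f = \D_0 f - \nu|x|^{-1}f$, and the Coulomb term is controlled by Hardy's inequality $\||x|^{-1}f\|_{L^2}\le 2\|\nabla f\|_{L^2}$ in $3D$, with a constant $2|\nu|<2$ that stays bounded as $|\nu|$ stays away from $1$ (indeed it is bounded for all $|\nu|\le 1$). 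This gives the first bullet at $s=1$; interpolating (via the spectral theorem for the self-adjoint operator $|\D|$, or complex interpolation of the analytic family $|\D|^{z}(-\Delta)^{-z/2}$) with the $s=0$ case yields $\||\D|^s f\|_{L^2}\le C_1\|f\|_{\dot H^s}$ for all $s\in[0,1]$.

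For the reverse inequality one needs to go up to $s=\tfrac12+\sqrt{1-\nu^2}$, and here the natural tool is the resolvent/spectral subordination expressing $|\D_0|^{2s}=(-\Delta)^s$ in terms of $|\D|$. The cleanest route is to use the partial-wave decomposition from subsection \ref{pwdsub}: on each channel $\mathcal{D}_{\nu,k}$ the operator $|\D|$ acts, via the isometry $\PP$, as multiplication by $E$, while $(-\Delta)^{1/2}$ on the corresponding spherical level is comparable to a Bessel-type operator. What makes $s=\tfrac12+\sqrt{1-\nu^2}$ the threshold is exactly the local behavior near $r=0$: the generalized eigenfunctions $\psi_k$ behave like $\rho^{\gamma-1}$ with $\gamma=\sqrt{k^2-\nu^2}\ge\sqrt{1-\nu^2}$, so the domain of $|\D|^s$ tolerates singularities up to order matching $\dot H^s$ precisely while $s<\tfrac32$ and the worst channel $|k|=1$ gives the constraint $s\le \tfrac12+\sqrt{1-\nu^2}$; equivalently, $|\D|^{-s}$ maps $L^2$ into $\dot H^s$ as long as $s-\tfrac12$ does not exceed $\sqrt{1-\nu^2}=\gamma_{\min}$. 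I would make this rigorous by comparing, channel by channel, the quadratic forms $\int_0^\infty |(\PP_k^{-1}\text{-side})|$ against $\||\nabla|^s(\cdot)\|^2$ using the known asymptotics of $\psi_k$ (the large-$\rho$ oscillatory regime contributes an $L^2$-equivalent "free" part, and the small-$\rho$ regime is controlled by the $\rho^{\gamma-1}$ behavior and a Hardy-type inequality with sharp constant depending on $\gamma$).

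A slicker alternative, which I would actually prefer to write, is purely abstract: by the second bullet it suffices to show $|\D|^{-s}:L^2\to\dot H^s$ is bounded, and since $\dot H^s=\dot H^{1/2}\cdot\dot H^{s-1/2}$ in the obvious sense, one combines the already-proven range $s\in[0,1]$ (by duality $\|f\|_{\dot H^s}\le C\||\D|^s f\|_{L^2}$ also holds for $s\in[0,1]$ once one knows $|\D|$ is comparable to $\sqrt{-\Delta}$ on its form domain, which for $|\nu|<1$ follows from the sharp Hardy inequality giving $(1-c_\nu)\|\nabla f\|_{L^2}\le \||\D| f\|_{L^2}\le(1+c_\nu)\|\nabla f\|_{L^2}$ with $c_\nu<1$) with an endpoint at $s=\tfrac12+\sqrt{1-\nu^2}$ obtained from the fact that $\mathcal{D}_{\nu}$ has a distinguished self-adjoint extension whose form domain contains $\dot H^{1/2}$ and whose operator domain, when $|x|^{-1}$-weighted, reaches exactly $|x|^{-(1/2+\sqrt{1-\nu^2})+}L^2$. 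Then interpolate.

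The main obstacle is the \textbf{upper endpoint $s=\tfrac12+\sqrt{1-\nu^2}>1$}: for $s>1$ one can no longer use the naive form-domain comparison $|\D|\sim\sqrt{-\Delta}$, because the commutator structure matters and the Coulomb singularity genuinely limits the regularity — this is why the range stops at $\tfrac12+\sqrt{1-\nu^2}$ rather than extending to all $s$. Handling this honestly requires the partial-wave analysis: one must show that on each channel $|\D_k|^s\sim (\text{free Bessel operator})^s$ with constants uniform in $k$ on the stated range, which boils down to the near-origin matching $\rho^{\gamma-1}$ versus $\rho^{|k|-1}$ (the free exponent) and checking the resulting weighted Hardy inequality $\||x|^{-s}g\|_{L^2}\lesssim \|(-\Delta)^{s/2}g\|_{L^2}$ holds with the constant controlled as long as $s<3/2$ and the angular part is in the relevant channel. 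The uniformity in $k$ and the blow-up of constants only as $|\nu|\to1$ (tracking $\gamma_{\min}=\sqrt{1-\nu^2}\to0$) is the delicate bookkeeping. I expect the cleanest writeup states the $s=1$ two-sided bound via Hardy, states the $s=\tfrac12+\sqrt{1-\nu^2}$ one-sided bound via the known characterization of the domain of the distinguished extension of $\mathcal{D}_\nu$ (citing \cite{estlewser}), and then invokes complex interpolation for everything in between.
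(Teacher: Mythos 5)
Your treatment of the first bullet matches the paper's intent: the paper's one-line remark that it ``is a consequence of Hardy inequality'' is exactly the argument you give ($\|\mathcal{D}f\|_{L^2}=\|\nabla f\|_{L^2}$ together with $\||x|^{-1}f\|_{L^2}\le 2\|\nabla f\|_{L^2}$ in $3D$ gives the $s=1$ case, $s=0$ is trivial, interpolate).

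For the second bullet the paper's proof is simply a citation to \cite[Corollary 1.8]{frank} (Frank--Merz--Siedentop, ``Equivalence of Sobolev norms involving generalized Hardy operators''), which establishes the sharp range $s\le\frac12+\sqrt{1-\nu^2}$ directly. Your proposal instead tries to reprove this, and it contains a concrete error that would sink the argument. You assert a two-sided comparison at $s=1$, i.e.\ $(1-c_\nu)\|\nabla f\|_{L^2}\le\||\D|f\|_{L^2}$ with $c_\nu<1$ for all $|\nu|<1$, ``from the sharp Hardy inequality.'' This is false for $|\nu|>\frac{\sqrt3}{2}$. First, plain Hardy only gives $\|\nabla f\|\le\|\D f\|+2|\nu|\|\nabla f\|$, hence a coercivity constant $1-2|\nu|$, which is nonpositive already for $|\nu|\ge\frac12$. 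More fundamentally, the statement itself is telling you this cannot be repaired: $s=1$ lies in the interval $[0,\frac12+\sqrt{1-\nu^2}]$ precisely when $|\nu|\le\frac{\sqrt3}{2}$, so for $\frac{\sqrt3}{2}<|\nu|<1$ the lemma does \emph{not} claim $\|f\|_{\dot H^1}\lesssim\||\D|f\|_{L^2}$, and indeed the domain of the distinguished self-adjoint extension contains functions with local behavior $|x|^{\gamma-1}$, $\gamma=\sqrt{1-\nu^2}<\frac12$, which are not in $\dot H^1$. So one cannot anchor the interpolation at a two-sided $s=1$ bound; the upper endpoint genuinely is $\frac12+\sqrt{1-\nu^2}$, and the honest route is either the partial-wave/weighted-Hardy analysis you sketch (carried out carefully with $\gamma$-dependent constants) or, as the paper does, to invoke \cite{frank}. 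Your high-level explanation of why the threshold is $\frac12+\sqrt{1-\nu^2}$ (the $\rho^{\gamma-1}$ singularity of $\psi_k$ at $\rho=0$, worst at $|k|=1$) is exactly right; the concrete error is the claimed $s=1$ coercivity, which contradicts that very explanation.
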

\begin{proof}
The first inequality is a consequence of the Hardy inequality.
The second one has been proved in \cite[Corollary 1.8]{frank}.

\end{proof}

\subsection{An integral representation for the generalized eigenfunctions of $\D$}

In order to prove estimate \eqref{esteigformula} it will be crucial to have an explicit integral representation for the functions $F_{k}$ and  $G_{k}$.
We resort on \emph{Whittaker functions}: we recall their definition 
$$
M_{\alpha,\mu}(z)=e^{-\frac12z}z^{\frac12+\mu}{_1F_1}(1/2+\mu-\alpha,1+2\mu,z)\,.
$$
We rely on the following integral representation for $M_{\alpha,\mu}$ (see \cite{abram}, p. 505)
\begin{equation}
\begin{split}
M_{\alpha,\mu}(z)&=\frac{\Gamma(1+2\mu)z^{\mu+\frac12}2^{-2\mu}}{\Gamma(1/2+\mu-\alpha)\Gamma(1/2+\mu+\alpha)}\int_{-1}^1e^{\frac12zt}(1+t)^{\mu-\frac12-\alpha}(1-t)^{\mu-\frac12+\alpha}dt
\end{split}
\end{equation}
and we take $\alpha=1/2+i\nu$ and $\mu=\gamma$, which gives
\begin{equation}\label{eigenintraprep}
(G_k+iF_k)(\rho) = \frac{|\Gamma(\gamma+1+i\nu)|}{\Gamma(\gamma+1+i\nu)}\frac{e^{\frac{\pi\nu}2}e^{i\xi}{\rho}^{\gamma-1}
}{2^{\gamma+1/2}\Gamma(\gamma-i\nu)}
\int_{-1}^1e^{-i\rho t}(1+t)^{\gamma-1-i\nu}(1-t)^{\gamma+i\nu} dt
\,.
\end{equation}
In what follows, we shall adopt the following compact notations.
Recalling the notations $\,j_{0,k}=|\psi_k|\,$ and $\,j_{1,k}=|\psi'_k-(\gamma-1)\rho^{-1}\psi_k|\,$ of Theorem \ref{esteigen},
we may write, for $\varepsilon\in\{0,1\}\,,$ $k\in \Z^*$, $\gamma=\sqrt{k^2-\nu^2}$ and $\rho\in \R^*\,$,
\begin{equation}\label{moduleigen}
j_{\varepsilon,k}(\rho) = \frac{e^{\frac{\pi\nu}2}\vert\rho\vert^{\gamma-1}
}{2^{\gamma+1/2}|\Gamma(\gamma-i\nu)|}
|I_{\varepsilon,\gamma,\rho}|
\end{equation}
with
\begin{equation}\label{integral}
I_{\varepsilon,\gamma,\rho}=\int_{-1}^1 e^{-i\rho t}t^\varepsilon(1+t)^{\gamma-1-i\nu}(1-t)^{\gamma+i\nu} dt\,.
\end{equation}

\subsection{The method of steepest descent.}\label{method}
 
In the proof of \eqref{esteigformula}, the main difficulty is to estimate $I_{\varepsilon,\gamma,\rho}$ when $\rho$ and $\gamma$ are both very large and $\gamma/\rho$ stays away from zero. To study this asymptotic regime, we resort to the \textit{method of steepest descent}, also called {\it saddle-point method}, that we now briefly recall in our context (for a general exposition and other examples we refer to \cite{erd}, \cite{dieu}, \cite{temme} and the references therein).\medskip

Since $\gamma-1$ is at least proportional to $\rho$, it is convenient to introduce the parameter $q=\frac{\gamma-1}{\rho}$. Then formula \eqref{integral} may be rewritten in the form
\begin{equation}\label{intC}
I_{\varepsilon,\gamma,\rho}=\int_{-1}^1 g_\varepsilon(t)e^{\rho h_q(t)}dt
\end{equation}
with
\begin{equation}\label{geps-hq}
g_\varepsilon(t)=t^\varepsilon(1+t)^{-i\nu}(1-t)^{1+i\nu}\ \ \hbox{ and }\ \ h_q(t)=-it+q\ln(1-t^2)\,.
\end{equation}

When $t\in (-1,1)$, $h_q(t)$ is neither real nor imaginary, so we cannot directly apply Laplace's method or the stationary phase. But $h_q$ and $g_\varepsilon$ can be analytically continued on $\Omega=\C\setminus ((-\infty,-1]\cup [1,\infty))$
and, by Cauchy's theorem, the value of $I_{\varepsilon,\gamma,\rho}$ is not modified if one deforms the interval of integration into a new oriented path $\Gamma_q$ in $\Omega$ having the same end points $-1$ and $1$. The ideal choice is a path made of ``steepest descent" curves of $\Re\{h_q(z)\}$, that is, curves tangent to the gradient of this function. Since $h_q$ satisfies the Cauchy-Riemann equation, $\Im\{h_q(z)\}$ is constant on these curves and they are separated from each other by saddle points of $\Re\{h_q(z)\}$ that are zeroes of the complex derivative $h_q'(z)$. Any maximizer of $\Re\{h_q(z)\}$ on $\Gamma_q$ must be such a saddle point. The asymptotic behaviour of the deformed integral  $\int_{\Gamma_q} g_\varepsilon(z)e^{\rho h_q(z)}dz$ for $\vert\rho\vert$ large is then found by Laplace's method and depends crucially on the behaviour of $h_q$ and $g_\varepsilon$ near the maximizers.
\medskip

There is an additional difficulty in our situation: the estimates we look for should be uniform in $q$, but the phase portrait of the vector field {\bf grad}$(\Re\{h_q\})$ varies a lot with $q$, even from a topological viewpoint. This forces us to split our study into several cases. Two ranges of values of $q$ are particularly problematic.\medskip

The first one is when $\vert q\vert$ is close to $1$. At $q=1$ one observes the coalescence of two saddle-points and this is why \eqref{esteigformula} only gives the estimate $\vert\psi_k(\pm k)\vert \leq C \vert k\vert^{-5/6}$ (which is optimal, as we will see in the proof) while, when $\vert\rho/k\vert $ stays away from $1$, the decay is faster. This type of degeneracy was studied in the general case by Chester, Friedman and Ursell \cite{chester}, who proved a uniform asymptotic formula involving the Airy function and its derivative. It would probably be possible to use their result followed by {\it a priori} bounds on the Airy function, but we chose to estimate directly our deformed integral after a careful choice of the integration path in which some parts of the steepest descent curves are replaced by piecewise affine approximations.\medskip

The second problematic case is when $0< \vert q\vert \ll 1$. Then the saddle points converge to $\pm 1$, that are branch points of $\ln(1-z^2)$. We solve this problem thanks to a suitable rescaling.

\section{Proof of Theorem \ref{esteigen}.}\label{esteigesec}

Several constants will appear throughout the proof, that will often be denoted with the same letters: what really matters is that all the constants can be taken independent of $\varepsilon$, $\rho$, $k$ and $\nu$. Notice also that the proof will be the same for the two functions $j_{0,k}=\vert \psi_k\vert$ and $j_{1,k}=\vert \psi'_k-(\gamma-1)\rho^{-1}\psi_k \vert$ in \eqref{moduleigen}, since the corresponding two versions of formula \eqref{moduleigen} only differ by the harmless factor $t^\varepsilon$ in the integrand.
Another point is that the value of $j_{\varepsilon,k}(\rho)$ remains unchanged if we simultaneously replace $\nu$ by $-\nu$ and $\rho$ by $-\rho$ (this just acts by complex conjugation on $\Gamma(\gamma-i\nu)$ and the integrand of $I_{\varepsilon,\gamma,\rho}$). So, in the sequel of this proof, without restricting the generality we shall only consider positive values of $\rho$, but we make no sign assumption on $\nu$.

In the right-hand side of formula \eqref{moduleigen}, the integral is multiplied by the prefactor $\frac{e^{\frac{\pi\nu}2}\rho^{\gamma-1}
}{2^{\gamma+1/2}|\Gamma(\gamma-i\nu)|}$ that has to be estimated.
Stirling's formula
$\lim_{|z|\rightarrow +\infty}\frac{\sqrt{z}\Gamma(z)}{\left(\frac{z}e\right)^z\sqrt{2\pi}}=1
$
implies that 
$\frac{1}{\Gamma(\gamma-i\nu)}=\mathcal O\left((\frac{e}{\vert k\vert})^{\gamma-1/2}\right)$ for $\gamma\gg1$. On the other hand, the function $1/\Gamma(z)$ is bounded on the set $\{z\in\R_++i[-1,1]\,:\vert z\vert\geq 1\}$, and for any $k\in\Z^*,\,\nu\in [-1,1]$, the numbers $\gamma-i\nu$ and $\gamma+1$ belong to this set. As a consequence we get, for any $k\in \Z^*$ and $\nu\in [-1,1]\,,$ the two estimates

\begin{equation}\label{prefactor}
\frac{e^{\frac{\pi\nu}2}\rho^{\gamma-1}
}{2^{\gamma+1/2}|\Gamma(\gamma-i\nu)|}\leq C\left(\frac{ \:e}{2|k|}\right)^{\gamma-1/2}\rho^{\gamma-1}\,,
\end{equation}
\begin{equation}\label{prefactorbis}
\frac{e^{\frac{\pi\nu}2}\rho^{\gamma-1}
}{2^{\gamma+1/2}|\Gamma(\gamma-i\nu)|}\leq C\frac{(\gamma+1)\rho^{\gamma-1}}{2^{\gamma}\Gamma(\gamma+1)}\,.
\end{equation}
We now have to bound the integral $I_{\varepsilon,\gamma,\rho}\,$ given by formula \eqref{integral} in order to prove estimate \eqref{esteigformula}. We recall our notations $\,q=\frac{\gamma-1}{\rho}\,,$ $\,g_\varepsilon(z)=z^\varepsilon(1+z)^{-i\nu}(1-z)^{1+i\nu}$ and $\,h_q(z)=-iz+q\ln(1-z^2)$ for $z$ in $\Omega=\C\setminus ((-\infty,-1]\cup [1,\infty))\,.$\medskip

\noindent
 For technical reasons, we split our set of parameters into three sectors, and in the third one we distinguish several cases and subcases depending on the value of $q$. 

\subsection{The sector $0<\rho\leq\max\{\vert k\vert/2,2\}$}\label{sector 1} In this region we do not need the method of steepest descent.\medskip

First of all, $I_{\varepsilon,\gamma,\rho}\,$ is uniformly bounded when $\gamma\geq 1$. Indeed, its integrand $\phi(t)=e^{-i\rho t}t^\varepsilon(1+t)^{\gamma-1-i\nu}(1-t)^{\gamma+i\nu}$ has modulus
$\,t^\varepsilon(1-t^2)^{\gamma-1}(1+t)\leq (1+t)\,.$\medskip

When $\gamma\in [0,1)$, which means that $\vert k\vert=\vert\gamma-i\nu\vert=1$, we may write $I_{\varepsilon,\gamma,\rho}=I^-+I^+$ with
$I^-=\int_{-1}^0\phi(t) dt\ ,\ 
I^+=\int_{0}^1\phi(t) dt\,.$ For $t\in (0,1)$ we have $\vert\phi(t)\vert\leq 1$, hence $\vert I^+\vert\leq 1$. For $t\in (-1,0)$ we write $\phi(t)=u(t)v'(t)$ with $u(t)=e^{-i\rho t}t^\varepsilon(1-t)^{\gamma+i\nu}$ and $v(t)=(\gamma-i\nu)^{-1}(1+t)^{\gamma-i\nu}$, hence, after integration by parts,
$\vert I^-\vert\leq C(1 + \rho)\,.$\medskip

Gathering the above estimates on $I_{\varepsilon,\gamma,\rho}$ and combining them with \eqref{prefactor}, we get the bounds
\begin{equation}\label{est1}
j_{\varepsilon,\pm 1}(\rho)\leq C\rho^{\gamma-1}(1+\rho)\ \hbox{ and }\ j_{\varepsilon,k}(\rho)\leq C\left(\frac{ \:e}{2| k |}\right)^{\gamma-1/2}\rho^{\gamma-1}\ \hbox{ for }\ \vert k\vert\geq 2\,\,.
\end{equation}

Using \eqref{est1}, we easily get an estimate of the form
$
j_{\varepsilon,k}(\rho)\leq C e^{-D\vert k\vert}(\rho/2)^{\gamma-1}\,
$
in the region $0<\rho\leq 2\,,$ $k\in \Z^*\,$. In the region $2\leq\rho\leq\frac{\vert k\vert}{2}$, using \ref{est1} again, we see that 
$j_{\varepsilon,k}(\rho)\leq C \left(\frac{e}4\right)^\gamma\leq C e^{-D\vert k\vert}\,.
$
The combination of these two estimates gives \eqref{esteigformula} in the region $0<\vert \rho \vert \leq \max\{\vert k \vert/2\,,\,2\}\,,$ $k\in \Z^*\,$.\medskip

\subsection{The sector $\rho \geq\max\{2\,, (\gamma+1)^2/2\}$}\label{straight lines}

In this region, $q$ tends to zero sufficiently fast when $\rho$ goes to infinity and it is sufficient to use a contour made of the steepest descent curves of $\Re\{h_0(z)\}=-\Im\{z\}$. These curves are just straight lines parallel to the imaginary axis, so we are going to integrate the holomorphic function $g_\varepsilon e^{\rho h_q}$ on $\Gamma_0=(-1,-1-i\infty)\cup (1-i\infty,1)\,.$ It is easy to justify that the integral on $\Gamma_0$ coincides with $I_{\varepsilon,\gamma,\rho}$, by, first, deforming $(-1,1)$ into the bounded contour $(-1,-1-iA]\cup [-1-iA,1-iA]\cup [1-iA,1)$ for $A>0$, then passing to the limit $A\to\infty$, thanks to the exponential decay of $g_\varepsilon(z) e^{\rho h_q(z)}$ when $\Im(z)\to-\infty\,$.

\begin{figure}[htp]
\begin{center}
\includegraphics[width=14cm,height=12cm]{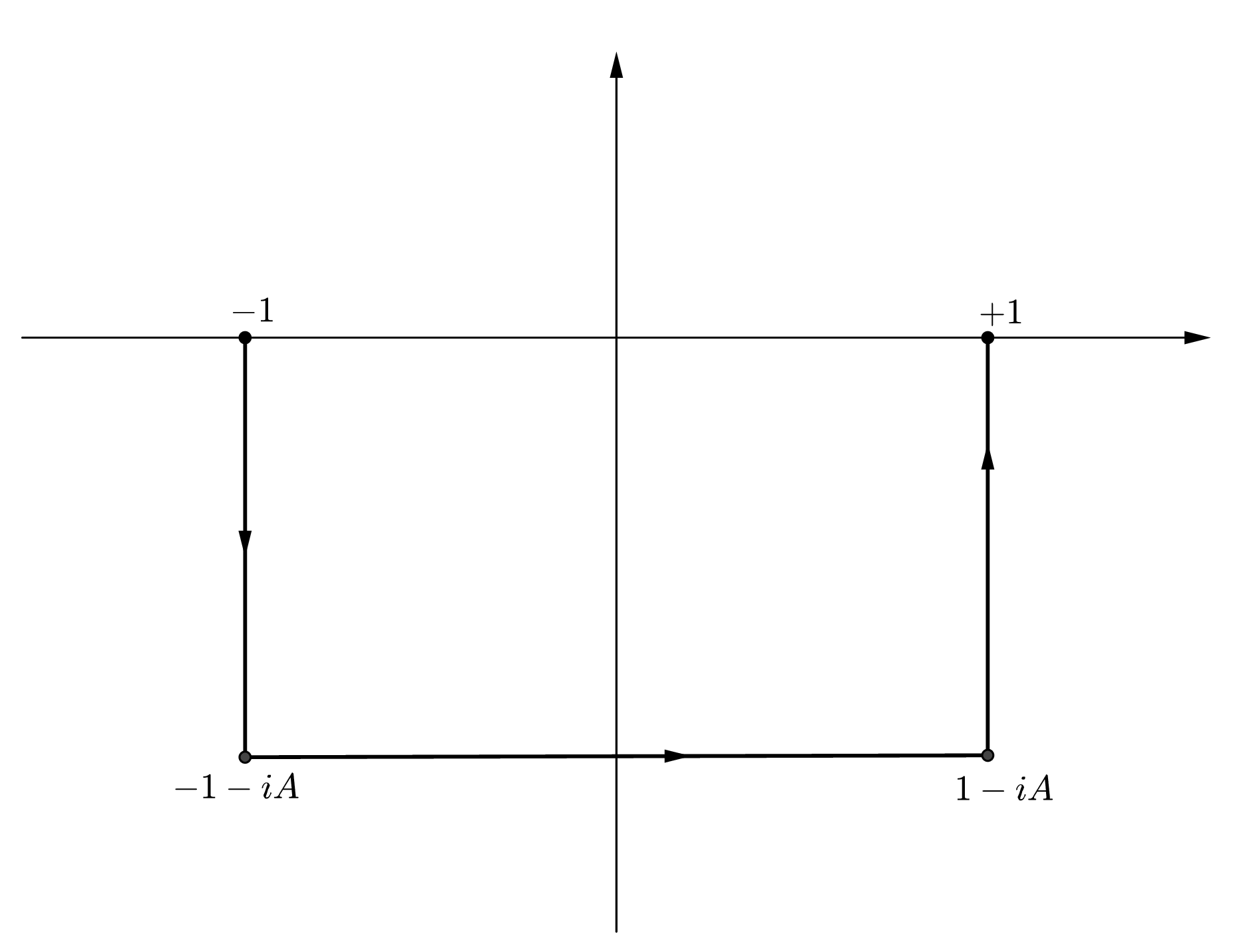}
\hspace{3mm}
\caption{\small{The deformed path.}}
\end{center}
\end{figure}

So we may write $I_{\varepsilon,\gamma,\rho}=I^{(-1)}+I^{(1)}$ with
$$I^{(-1)}=\int_{-1}^{-1-i\infty} g_\varepsilon(z) e^{\rho h_q(z)}dz=-i\int_{0}^\infty e^{-\rho t}(-1-it)^\varepsilon(-it)^{\gamma-1-i\nu}(2+it)^{\gamma+i\nu} dt\,,$$
$$I^{(1)}=\int_{1-i\infty}^{1} g_\varepsilon(z) e^{\rho h_q(z)}dz=i\int_{0}^\infty e^{-\rho t}(1-it)^\varepsilon (it)^{\gamma+i\nu}(2-it)^{\gamma-1-i\nu} dt\,.$$
Then, for any $\gamma\geq 0$,
$$\vert I^{(1)}\vert\leq 2^{\gamma}\int_{0}^{\infty} e^{-\rho t}\,t^{\gamma}\left(1+\frac{t^2}{4}\right)^{\frac{\gamma}{2}} dt\,.$$
Since 
$
(1+a^2)^b\leq (1+a)^{2b}\leq e^{2ab}\,
$ for all $a,\,b\in\R_+\,$, when $0\leq \gamma< 2\rho\,$ the above inequality implies
\begin{equation}\label{I(1)}
\vert I^{(1)}\vert\leq C\, 2^{\gamma} \int_{0}^{\infty} e^{-\left(\rho-\frac\gamma2\right) t}\,t^{\gamma} dt=C\, 2^{\gamma}\left(\rho-\frac\gamma2\right)^{-\gamma-1}\Gamma(\gamma+1)\,.
\end{equation}
Similarly, when $1\leq\gamma < 2\rho-1\,$, we get
$I^{(-1)}\leq C\, 2^{\gamma+1}\left(\rho-\frac{\gamma+1}{2}\right)^{-\gamma}\Gamma(\gamma)\,.$ Combining these two estimates and using \eqref{moduleigen} together with \eqref{prefactorbis}, we get
\begin{equation}\label{decay1}
j_{\varepsilon,k}(\rho)\leq \frac{C}{\rho}\left(1-\frac{\gamma+1}{2\rho}\right)^{-\gamma-1}\leq \frac{C'}{\rho}\ \hbox{ for } \ \rho\geq (\gamma+1)^2/2\,,\ \vert k\vert\geq 2\,.
\end{equation}
For $\gamma\in [0,1)$ {\it i.e.} $\vert k\vert =\vert \gamma-i\nu\vert=1$, an integration by parts gives us
$$I^{(-1)}=-\int_{0}^\infty \frac{(-it)^{\gamma-i\nu}}{(\gamma-i\nu)}\frac{d}{dt}\left( e^{-\rho t}(-1-it)^\varepsilon(2+it)^{\gamma+i\nu}\right) dt\,,$$
so, after a short calculation, for $\rho\geq 2$ we find
$$
\vert I^{(-1)}\vert\leq C(\rho+1)\int_{0}^{\infty} e^{-\rho t}\,t^{\gamma}\left(1+\frac{t^2}{4}\right)^{\frac{\gamma+1}{2}} dt
\leq
C(\rho+1)\left(\rho-\frac{\gamma+1}{2}\right)^{-\gamma-1}\Gamma(\gamma+1)\,.
$$
Combining this inequality with \eqref{prefactorbis} and \eqref{I(1)} we get the bound $j_{\varepsilon,k}(\rho)\leq \frac{C''}{\rho}$ for $k=\pm 1\,,\ \rho\geq 2\,.$ This, together with \eqref{decay1}, ends the proof of \eqref{esteigformula} in the sector $\rho \geq\max\{2\,, (\gamma+1)^2/2\}$, for all $k\in\Z^*$ and $\nu\in [-1,1]\,.$\medskip

\subsection{The remaining sector: $\vert k\vert\geq 2\,$ and $\vert k\vert/2\leq \rho \leq (\gamma+1)^2/2$}

Here we have to use the steepest descent curves of $\Re(h_q)$ for $q=\frac{\gamma-1}{\rho}\in (0,2)\,.$\medskip

To find the saddle-points of $h_q$, we compute
\begin{equation}\label{h'eq}
h_q'(z)=-i+\frac{q}{z-1}+\frac{q}{z+1},
\end{equation}
so 
\begin{equation}\label{second degree}
h_q'(z)=0 \quad\Leftrightarrow\quad z^2+2iqz-1=0.
\end{equation}

Given that the discriminant of the equation above is $4(1-q^{2})$, we will treat separately the cases $0< q<1$ and $1\leq q\leq 2$, introducing some additional subcases in order to help our presentation. The thresholds $q_0,\,q_1,\,q_2$ that separate the subcases satisfy the conditions $2>q_0>1>q_1>q_2>0$. They will be chosen during the proof: $q_0-1\,,\,1-q_1$ and $q_2$ will have to be sufficiently small.
\medskip

\begin{enumerate}
\item {\bf Case $1\leq q\leq 2$}. The saddle-points are 
$
z_\pm^0=-i\left(q\pm\sqrt{{q^2}-1}\right)\,.
$
They coalesce when $q=1\,.$
We will see that the path $\Gamma_q$ connects $-1$ to $1$ by passing through $z_-^0$.\medskip

\noindent
Subcases: 
\begin{enumerate}
\item $q_0\leq q\leq 2\,$: $\vert z_+^0-z_-^0\vert$ is bounded away from zero.
\item $1\leq q\leq q_0\,$: $\vert z_+^0-z_-^0\vert$ is very small and vanishes when $q=1$.
\end{enumerate}
\medskip

\noindent
\item {\bf Case $0< q<1$}. The saddle-points are
$z^0_\pm=\pm\sqrt{1-q^2}-iq
\,.$
We will see that $\Gamma_q$ has two unbounded branches: one starting at $-1$ and passing through $z^0_-$, the other passing through $z^0_+$ and ending at $1$.\medskip

\noindent
Subcases:
\begin{enumerate}
\item
$\displaystyle q_2\leq q\leq q_1\,$: $\vert z_+^0-z_-^0\vert$, $\vert z_-^0+1\vert$ and $\vert z_+^0-1\vert$ are bounded away from zero.
\medskip
\item 
$\displaystyle q_1\leq q<1\,$: $\vert z_+^0-z_-^0\vert$ is very small.
\medskip
\item
$0< q\leq q_2\,$: $\vert z_-^0+1\vert$ and $\vert z_+^0-1\vert$ are very small.
\end{enumerate}
\end{enumerate}

\medskip

Before getting to the details of each case, let us briefly comment on the overall strategy. In each of the two macro-cases {\em (1)-(2)}, we will build a parametrized path $\Gamma_q$ of ``steepest descent" for $\Re\{h_q\}$ starting from $-1$ and ending at $1$. It will turn out that $\Gamma_q$ passes through one saddle point in case {\em (1)} and through both of them in case {\em (2)}. Once this construction is made, we will Taylor expand $\Re\{h_q(z)\}$ around the saddle points; this, together with some uniform estimates on $|g_\varepsilon(z)|$, allows the application of Laplace's method to estimate the integral $I_{\varepsilon,\gamma,\rho}$. Notice that this strategy works ``effortlessly" when $q$ stays away from $1$ and $0$, i.e. subcases {\em (1.a)} and {\em (2.a)}. Indeed, in those subcases the saddle points stay away from each other and from the branch points $\pm 1$, so the standard Laplace method applies and gives estimates that are uniform in $q$. The subcases {\em (1.b)} and {\em (2.b)} will need additional care: they correspond to the zone $q\sim1$ of coalescence of the two saddle-points. In this zone one has to go to third order in the Taylor expansion of $\Re\{h_q\}$, moreover the behaviour of $\Gamma_q$ near the saddle-points is more complicated. For this reason we will locally replace $\Gamma_q$ by a carefully chosen piecewise affine curve: this will simplify the computations. The subcase {\em (2.c)} presents another type of difficulty: when $q\rightarrow 0$, the saddle points $z^0_\pm$ converge to $\pm 1$. To overcome this last problem, we will need to locally rescale the parametrization of $\Gamma_q$ and to perform some double expansions in powers of $q$ and of the rescaled parameter $u$.

\medskip

We now start dealing with the cases and subcases one by one. Thanks to the identity $h_q(-\overline{z})=\overline{h_q(z)}$, our steepest descent paths $\Gamma_q$ will be symmetric with respect to the imaginary axis. Moreover it will turn out that $\Gamma_q$ lies in the domain $\mathcal D =(-1,1)+i(-\infty,0)$. In this domain we have $\Re\{1-z^2\}>0$, so the real and imaginary parts of $h_q(z)=-iz+q\ln(1-z^2)$ are given by
\begin{equation}\label{Re h}
\Re\{h_q(z)\}= \Im\{z\}+q\ln\vert1-z^2\vert=y+\frac{q}{2}\ln(\,(1-x^2+y^2)^2+4x^2y^2\,)\,,
\end{equation}

\begin{equation}\label{Im h}
\Im\{h_q(z)\}=-\Re\{z\}+q\arg (1-z^2)=-x-q\arctan\left(\frac{2xy}{1-x^2+y^2}\right)\,
\end{equation}
for any complex number $z=x+iy$.

\medskip

\subsubsection{Case (1): $1 < q\leq 2$ } Here, as already mentioned, the solutions to \eqref{h'eq} are purely imaginary:
\begin{equation}
z_\pm^0=-i\left(q\pm\sqrt{{q^2}-1}\right)
\end{equation}
It is not difficult to check that $\Re\{h_q(z^0_-)\}<\Re\{h_q(z^0_+)\}$ and $\Im\{h_q(z^0_\pm)\}=0$. Moreover, any complex number $z=x+iy\in \mathcal D$ satisfies $x/q\in (-\pi/2,\pi/2)$, so, from \eqref{Im h}, one has $\Im\{h_q(z)\}=0$ if and only if $x=0$ or $\,y^2+2\frac{x}{\tan(x/q)} y+(1-x^2)=0\,.$ So $h_q^{-1}(0)\cap \mathcal D=(-i\infty,0)\cup\Gamma_-\cup\Gamma_+\,$, where
$\Gamma_\pm=\left\{t+iy_\pm(t)\,,\;t\in(-1,1)\,\right\}$ and
\begin{equation}\label{curv}
\begin{cases}
y_\pm(t)=-\frac{t}{\tan(t/q)}\mp\sqrt{\frac{t^2}{\sin^2(t/q)}-1}\quad t\in (-1,0)\cup(0,1);\\
y_\pm(0)=-q\mp\sqrt{{q^2}-1}.
\end{cases}
\end{equation}
We have $\,\lim_{t\to\pm 1} y_-(t)=\pm 1\,,$ so $\pm 1$ are the end points of $\Gamma_-$ and it is natural to choose $\Gamma_q=\Gamma_-\,,$ {\it i.e.} $z_q(t)=t+iy_-(t)\,,$ $t\in (-1,1)\,.$ The maximum of $\Re\{h_q\circ z_q\}$ is attained at $z_q(0)=z^0_-\,.$

\begin{figure}[htp]
\begin{center}
\includegraphics[width=10cm,height=8cm]{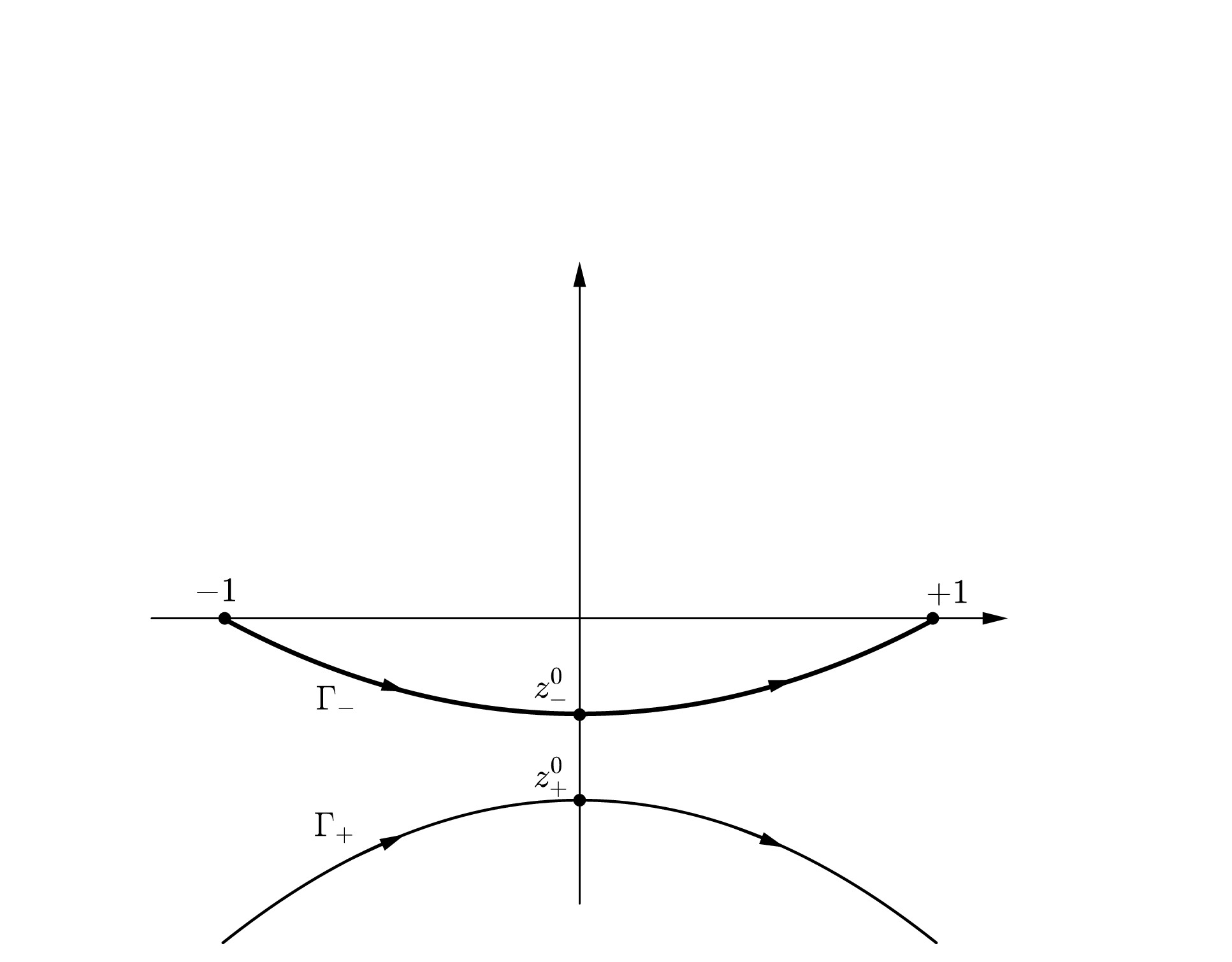}
\hspace{3mm}
\caption{\small{Picture of the paths $\Gamma_\pm$ defined by \eqref{curv}.}}
\label{fig1}
\end{center}
\end{figure}

 Note that the above arguments and formulas are still valid when $q=1\,,$ the only difference in this limiting case being that $\Gamma_-\,,\,\Gamma_+$ intersect at $z_-^0=z_+^0=-i$ and are not differentiable at this point.\medskip
 
We now need to distinguish further between the subcases when $q$ is significantly larger than $1$ and $q$ is close to $1$.

\medskip

{\em Subcase (1.a): $1<q_0\leq q \leq 2$.}
The estimate given in this subcase depends on the threshold $q_0>1$, that will be chosen in  {\em (1.b)}. We have

\begin{equation}\label{reder}
h_q(z^0_-)=q\ln\left(\frac{2q}{e}\right)+\sqrt{q^{2}-1}-q\ln\left(q+\sqrt{q^2-1}\right)\,.
\end{equation}

Note that the function ${\ell(q)}=\sqrt{q^{2}-1}-q\ln\left(q+\sqrt{q^2-1}\right)$ is decreasing on $[1,\infty)$. Indeed, if we make the change of variables $q=\cosh r\,,\,r\geq 0$, ${\ell(q)}$ becomes $\sinh(r)-r\cosh(r)$ whose derivative $-r\sinh(r)$ is negative. So, for $q\geq q_0>1$, we have ${\ell(q)}\leq {\ell(q_0)}<0\,.$ Since the maximum of $\Re\{h_q(z)\}$ on $\Gamma_q$ is attained at $z=z_-^0$, we conclude from \eqref{reder} that
$\vert e^{\rho h_q(z)}\vert \leq \left(\frac{2q}{e}\right)^{\gamma-1} e^{{\ell(q_0)}\rho}\,,\ \forall z\in \Gamma_q\,.$\medskip

Now, for $q$ in the compact set $[1,2]$, the length of the path $\Gamma_q$ is uniformly bounded as well as $\sup_{z\in\Gamma_q} \vert g_\varepsilon(z)\vert\,.$ As a consequence,
$$\vert I_{\varepsilon,\gamma,\rho}\vert=\left\vert\int_{\Gamma_q} g_\varepsilon(z)e^{\rho h_q(z)}dz\right\vert\leq C\left(\frac{2q}{e}\right)^{\gamma-1}e^{{\ell(q_0)}\rho}\,.$$
Combining this with \eqref{moduleigen}, \eqref{prefactor}, we get
\begin{equation}\label{estk1}
j_{\varepsilon,\gamma}(\rho)\leq C\vert k\vert^{-1/2}\,e^{{\ell(q_0)}\rho}\ \hbox{ for }\ q_0\rho+1\leq \gamma \leq 2\rho +1\,.
\end{equation}

Note that we could have obtained a more precise asymptotic estimate for $\rho$ large by using the full power of Laplace's method, but \eqref{estk1} is already much stronger than \eqref{esteigformula} in the domain $\vert k\vert/2\leq\rho\leq (\gamma-1)/q_0\,.$ It even allows to increase the domain of validity of the estimate $j_{\varepsilon,\gamma}(\rho)\leq C(\min\{\rho/2\,,\,1\})^{\gamma-1}\,e^{-D\vert k\vert}\,$. This exponential bound is stated in \eqref{esteigformula} and proved in subsection \ref{sector 1} for the sector $0< \vert \rho\vert \leq \vert k\vert/2\,$. Using \eqref{estk1}, one can extend it to the larger sector $0<\vert\rho\vert \leq (1-\eta)\vert k\vert$ for any $0<\eta<1/2\,,$ the positive constant $D$ depending only on $\eta$. But $D(\eta)$ tends to zero  as $\eta\to 0\,$, so in order to estimate properly $j_{\varepsilon,\gamma}(\rho)\,$ when $q$ approaches $1$, more work is needed. This leads us to the next subcase.

\medskip

{\em Subcase (1.b): $1\leq q\leq q_0\,$.}
This subcase is delicate, due to the degeneracy at $q=1$.
We need precise informations on the behaviour of the derivatives of $h_q\,$ at $z^0_-\,$ when $q-1$ becomes small. First of all, in our study of subcase $(1.a)\,$ we have proved that
\begin{equation}
\label{reder0}
h_q(z^0_-)\leq q\,\ln\left(\frac{2q}{e}\right)\,,\quad \forall q\geq 1\,.
\end{equation}
We recall in addition that $h'_q(z^0_-)=0$. For the higher derivatives, we first compute
$h_q''(-i)=0\,,$ $h_q^{(3)}(-i)=qi$ and we see that for some $M>0$, the fourth derivative $h^{(4)}_q(z)=-\frac{6q}{(z-1)^4}-\frac{6q}{(z+1)^4}$ satisfies
\begin{equation}
\label{reder4}
\vert h_q^{(4)}(z)\vert\leq M\,,\ \forall z\in [-1/2,1/2]+i\R\,,\;\forall q\in [1,2]\,.
\end{equation}
Moreover
$
z^0_-=-i+i\sqrt{2(q-1)}+O(q-1)\,,
$
hence
\begin{equation}
\label{reder2}
h''_q(z^0_-)=-\sqrt{2(q-1)}+O(q-1)\,,
\end{equation}
\begin{equation}\label{reder3}
h^{(3)}_q(z^0_-)=i+O(\sqrt{q-1})\,.
\end{equation}

For $q>1$ the function $z_q$ is of class $C^1$, with $z_q(0)=z^0_-$ and $z'_q(0)=1\,$. But when $q\to 1\,,$ $z_q$ converges uniformly to $z_1\,$ on $(-1,1)$ and we have, for $\vert t\vert\ll 1\,,$
\begin{equation}\label{development}
z_1(t)=-i+t+i\vert t\vert/\sqrt{3}+o(t)=
\begin{cases}
-i+\frac{2}{\sqrt{3}}e^{i\pi/6}t+o(t)\;\,\quad\hbox{ if }\, t\geq 0\,,\\
-i+\frac{2}{\sqrt{3}}e^{5i\pi/6}t+o(t)\,\quad\hbox{ if }\, t\leq 0\,.

\end{cases}
\end{equation}

For $q\geq 1$ and $\delta\in (0,1/2)\,$, let $\alpha_q^\pm=\arg\{z_q(\pm\delta)-z_q(0)\}\,$. Since $\Re\{z_q\}(t)=t$ and $\Im\{z_q\}$ is even, we have $\alpha_q^-=\pi-\alpha_q^+\,$ and $z_q(\pm\delta)=\delta(\pm 1 + i\tan \alpha_q^+)=\frac{\delta }{\cos \alpha_q^+}e^{i\alpha^\pm_q}$.
From \eqref{development}, choosing $\delta$ small enough we may impose $\vert\alpha^+_1-\pi/6\vert<\pi/24$, hence $\vert\alpha^-_1-5\pi/6\vert<\pi/24\,.$ Then $\alpha_q^\pm$ will satisfy the same estimates for $q-1>0$ small enough. So, using \eqref{reder2}\eqref{reder3}, we may choose a threshold $q_0>1$ (that depends on $\delta$) such that, if $1\leq q\leq q_0$
then
$$\Re\{h''_q(z^0_-)e^{2i\alpha^\pm_q}\}\leq -\kappa\sqrt{q-1}\ \hbox{ and } \ \Re\{h^{(3)}_q(z^0_-)e^{3i\alpha^\pm_q}\}\leq -\kappa\,,$$
for some positive constant $\kappa\,$ independent of $\delta$ and $q\in [1,q_0]$. Then, using \eqref{reder4} and taking a smaller $\delta$ (hence a smaller $q_0$) if necessary, by Taylor's formula we can get an estimate of the form
\begin{equation}\label{descent}
\Re\{h'_q(z^0_-+s\,e^{i\alpha^\pm_q})e^{i\alpha^\pm_q}\}\leq -\kappa \left(\sqrt{q-1}\,s+s^2/4\right)\,,\ \forall s\in \left[0\,,\,\frac{\delta}{\cos \alpha_q^+}\right].
\end{equation}

This inequality gives us a controlled rate of descent for $\Re\{h_q\}$ along the segments $[z^0_-,z_q(\pm\delta)]$.
The idea now is to replace the steepest descent path $\Gamma_q$ by these ``segments of controlled descent" near $z^0_-$. The new integration curve is thus
\begin{equation}\label{Gammatilde}
\tilde\Gamma_q:=\tilde\Gamma_q^{(-2)}\cup\tilde\Gamma_q^{(-1)}\cup\tilde\Gamma_q^{(1)}\cup\tilde\Gamma_q^{(2)}
\end{equation}
where:
\begin{eqnarray*}
&\tilde\Gamma_q^{(-2)}&=\left\{z_q(t)\, :\ t\in[-1,-\delta]\right\};\\
&\tilde\Gamma_q^{(-1)}&=\big[z_q(-\delta), z_q(0)\big];\\
&\tilde\Gamma_q^{(1)}&=\big[z_q(0), z_q(\delta)\big];\\
&\tilde\Gamma_q^{(2)}&=\left\{z_q(t)\, :\ t\in[\delta,1]\right\}\,.
\end{eqnarray*}

\begin{figure}[htp]
\begin{center}
\includegraphics[width=15cm,height=7cm]{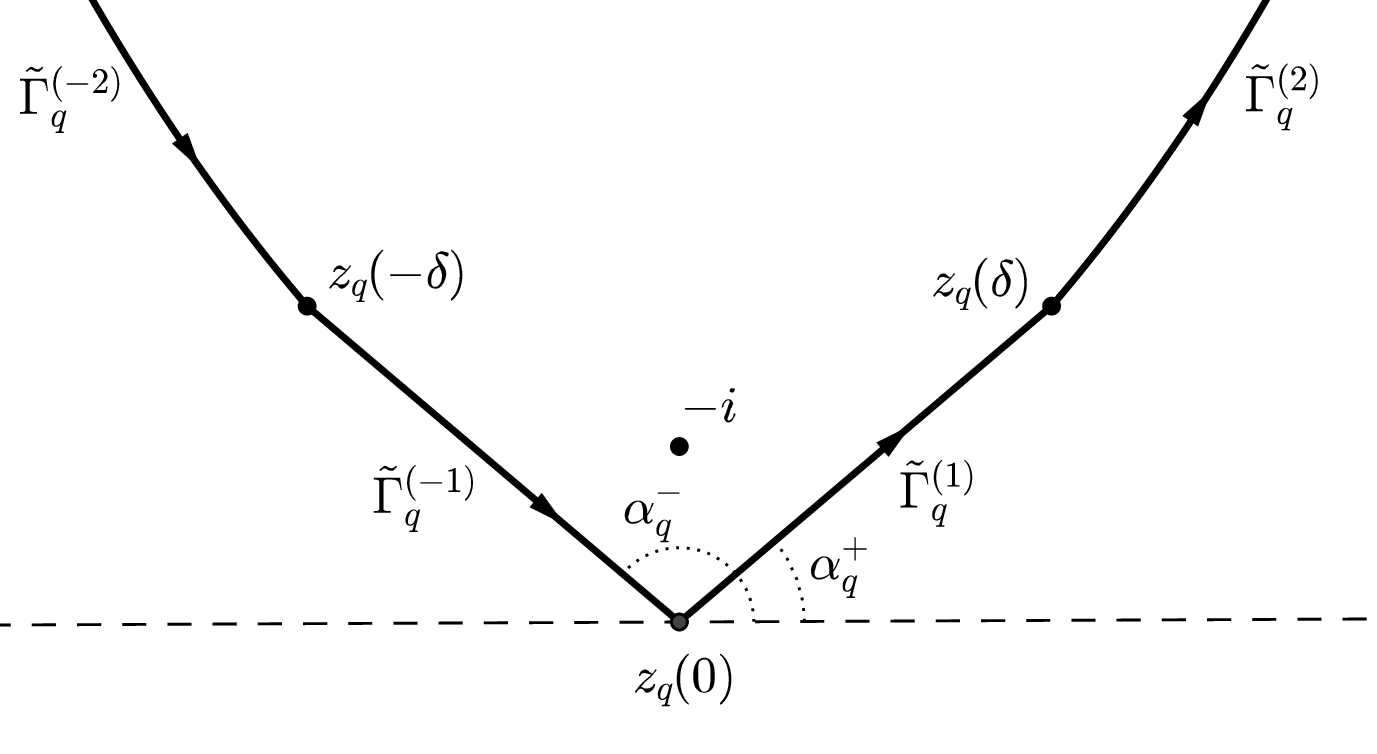}
\hspace{3mm}
\caption{\small{Picture of the curve $\tilde\Gamma_q$ as defined in \eqref{Gammatilde}.}}
\label{fig2}
\end{center}
\end{figure}
Inequalities \eqref{reder0}\eqref{descent} give an estimate on $\Re\left\{h_q(z)\right\}$ for $z$ in $\tilde{\Gamma}_q^{(-1)}\cup\tilde\Gamma_q^{(1)}$: there is $\kappa'>0\,$ such that for all $q\in [1,q_0]$ and $t\in [-\delta,\delta]\,$,
\begin{equation}\label{boundpm1}
\Re\{h_q(z^0_-+t + i\vert t\vert\tan \alpha_q^+)\}\leq q\,\ln\left(\frac{2q}{e}\right)-\kappa' \left(\sqrt{q-1}\,t^2+\vert t\vert^3\right)\,.
\end{equation}
On $\Gamma_q^{(\pm 2)}$,
$\Re\left\{h_q(z)\right\}$ achieves its maximum at $z=z_q(\pm\delta)$, so using formula \eqref{boundpm1} for $t= \delta$ we get the uniform bound
\begin{equation}\label{boundpm2}
\sup\left\{ \Re\left\{h_q(z)\right\}\,:\;z\in \tilde{\Gamma}_q^{(-2)}\cup\tilde\Gamma_q^{(2)}\right\} \leq q\,\ln\left(\frac{2q}{e}\right)-\kappa' \,\delta^3\,.
\end{equation}

Recalling that $\sup_{\tilde{\Gamma}_q}\vert g_\varepsilon\vert$ and the length of $\tilde{\Gamma}_q$ are uniformly bounded when the parameter $q$ varies in $[1,q_0]\,$, we now estimate $j_{\varepsilon,k}(\rho)= \frac{e^{\frac{\pi\nu}2}\rho^{\gamma-1}
}{2^{\gamma+1/2}|\Gamma(\gamma-i\nu)|}
|I_{\varepsilon,\gamma,\rho}|$ with $I_{\varepsilon,\gamma,\rho}=\int_{\tilde{\Gamma}_q} g_\varepsilon(z)e^{\rho h_q(z)}dz\,$.\medskip

\noindent
From \eqref{boundpm1},
$$ \left\vert\int_{\tilde{\Gamma}_q^{(-1)}\cup\tilde{\Gamma}_q^{(1)}} g_\varepsilon(z)e^{\rho h_q(z)}dz\right\vert \leq C\left(\frac{2q}{e}\right)^{\gamma-1}\int_\R e^{-\rho\kappa' (\sqrt{q-1}\,t^2+\vert t\vert^3)}dt$$
and from \eqref{boundpm2},
$$ \left\vert\int_{\tilde{\Gamma}_q^{(-2)}\cup\tilde{\Gamma}_q^{(2)}} g_\varepsilon(z)e^{\rho h_q(z)}dz\right\vert \leq C\left(\frac{2q}{e}\right)^{\gamma-1} e^{-\rho\,\kappa' \delta^3}\,.$$
So, using \eqref{prefactor}, we find the estimate

\begin{eqnarray*}
j_{\varepsilon,k}(\rho)&\leq& C \vert k\vert^{-1/2}\left( \int_\R e^{-\rho\kappa' (\sqrt{q-1}\,t^2+|t|^3)}dt +  e^{-\rho\,\kappa' \delta^3}  \right)\\
&\leq& C \vert k\vert^{-1/2} \left(\min\left\{\int_\R e^{-\rho\,\kappa' \sqrt{q-1}\,t^2}dt,\int_\R e^{-\rho\kappa' |t|^3}dt\right\} +e^{-\rho\,\kappa' \delta^3}\right)
\\
&\leq &
C' \vert k\vert^{-1/2} \min\left\{ (q-1)^{-\frac14}\rho^{-\frac12}, \rho^{-\frac13}\right\}
\\
&\leq &
C'' \vert k\vert^{-\frac34}\left(\vert k\vert^{\frac13}+|\vert k\vert-\rho|\right)^{-\frac14}.
\end{eqnarray*}

This ends the proof of \eqref{esteigformula} for $1\leq q\leq 2\,$. Note that for $q=1$ our estimate becomes $j_{\varepsilon,k}(\vert k \vert)=0(\vert k\vert^{-\frac56})$ and the exponent $-\frac56$ is optimal, as one easily checks by applying the standard asymptotic Laplace method to the integral $\int_{\Gamma_1} g_\varepsilon(z)e^{\rho h_1(z)}dz\,.$

\medskip

\subsubsection{ Case 2: $0< q<1$.} Introducing the angle $\theta_0=\arccos q\,$,
we may write the solutions of \eqref{second degree} as 
\begin{equation}\label{sadq>1}
z^0_\pm=e^{ i(-1\pm\theta_0)}\,.
\end{equation}

Easy calculations yield:
\begin{eqnarray}\label{derq>1}
\nonumber
&\Re\{h_q(z^0_\pm)\}&=q\,\ln\left(\frac{2q}{e}\right)\,;
\\
&h''_q(z^0_\pm)&=\tan \theta_0\, e^{\,\pm i(\frac{\pi}{2}-\theta_0)}\,;
\\
\nonumber
&h^{(3)}(z^0_\pm)&=e^{i\frac{\pi}{2}}+O(\theta_0)\,;
\\
\nonumber
&h^{(4)}(z)&=O(\vert z-1\vert^{-4}+\vert z+1\vert^{-4})\,.
\end{eqnarray}

\medskip

After some further computations we get
\begin{equation}
\Im \{h_q(z_\pm^0)\}=\pm\left(\theta_0\cos\theta_0-\sin\theta_0\right)\,.
\end{equation}

We thus have $\Im \{h_q(z_-^0)\}<0<\Im \{h_q(z_+^0)\}\,$, so we expect the path of steepest descent $\Gamma_q$ to have two components $\Gamma^l_q$ and $\Gamma^r_q\,$, with $\Gamma^l_q$ starting from $-1$ and passing through $z_-^0\,$, $\Gamma^l_q$ passing through $z_+^0\,$ and ending at $1\,$. Moreover it seems reasonable to look for
$\Gamma_q^r$ in the domain $(0,1)+i(-\infty,0)\,$ then to define $\Gamma^l_q$ as the symmetric of $\Gamma^l_q$ with respect to the imaginary axis.

\begin{figure}[htp]
\begin{center}
\includegraphics[width=13cm,height=9cm]{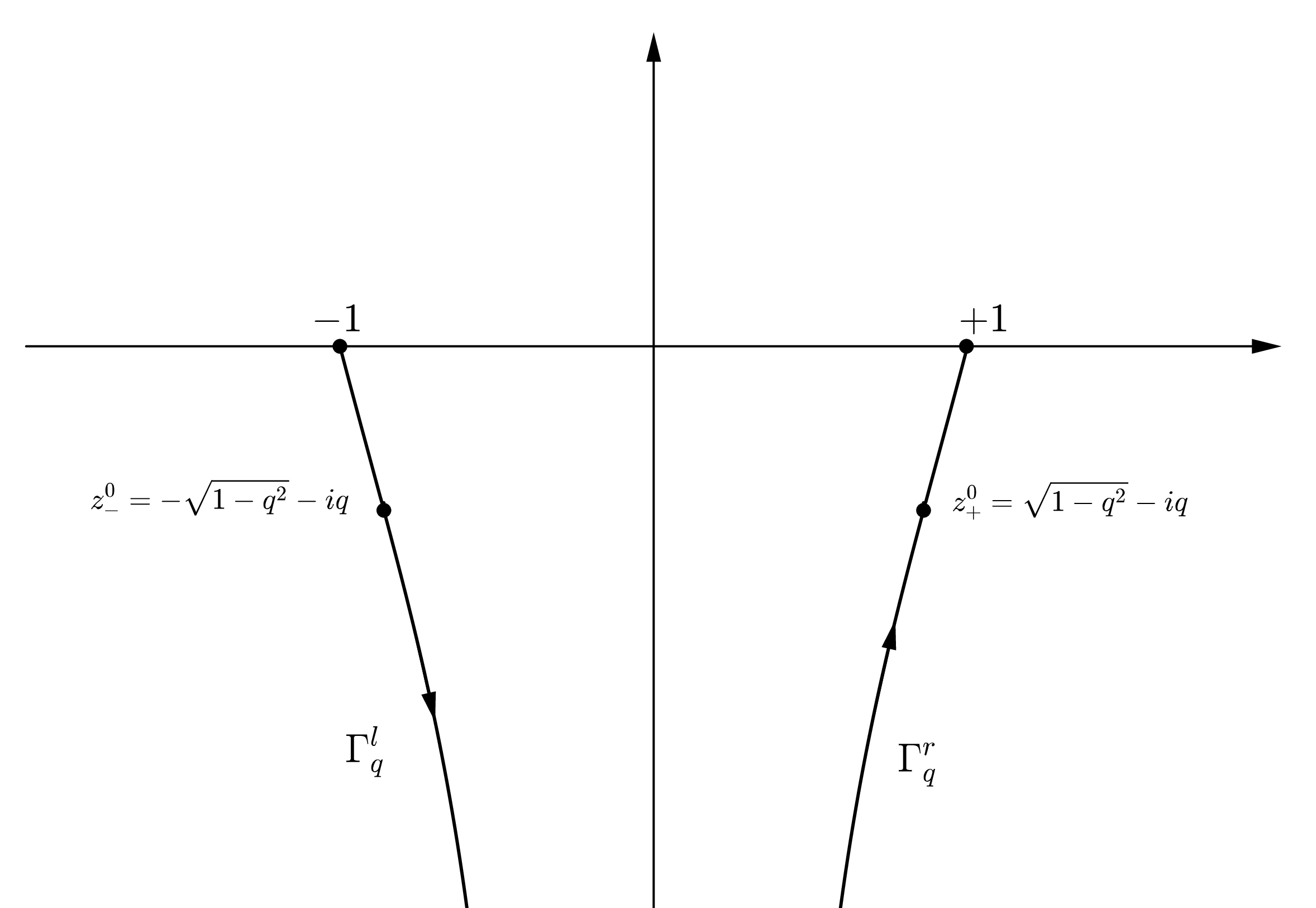}
\hspace{3mm}
\caption{\small{Picture of the curve $\Gamma_q=\Gamma^l_q\cup\Gamma^r_q$ defined above.}}
\label{fig4}
\end{center}
\end{figure}

\medskip

On $\Gamma^r_q\,$ one has $\Im \{h_q(z)\}=\Im \{h_q(z_+^0)\}$ . From \eqref{Im h}, this equation takes the form 
\begin{equation}\label{arctan}
q^{-1}(x-x_q)=-\arctan\left(\frac{2xy}{1-x^2+y^2}\right)
\end{equation}
with
\begin{equation}\label{maincod}
x_q:=\sin\theta_0-\theta_0\cos\theta_0\,\in\, (0,\sin\theta_0)\,\,.
\end{equation}
Note that $x_q$ is a decreasing function of $q\,$, $\underset{q\to 0}{\lim}\, x_q=1$ and $x_q \underset{q\to 1}{\sim} \frac{1}{3}(1-q^2)^{3/2}\,$. Since we imposed the condition $x+iy\in (0,1)+i(-\infty,0)$, if \eqref{arctan} holds then the angle $\theta:=q^{-1}(x-x_q)$ is positive, {\it i.e.} $x>x_q$. On the other hand, the condition $x<1$ is equivalent to $\theta<\theta_{\max}:=q^{-1}(1-x_q)=\frac{1-\sin\theta_0}{\cos\theta_0}+\theta_0\,$. Obviously, we have
$0<\theta_0<\theta_{\max}\,$. When $q$ decreases from $1$ to $0\,$, $\theta_0$ increases from $0$ to $\frac{\pi}2\,$ and $\theta_{\max}$ increases from $1$ to $\frac{\pi}2
\,$.\medskip

We are going to use the angle $\theta$ to parametrize $\Gamma^r_q\,$, the interval of parameters being $(0,\theta_{\max})\,$. For this purpose, we rewrite \eqref{arctan} as a system:
\begin{equation}\label{yeqfirst}
\begin{cases}
x=q\theta+x_q\,,\\
y^2+2\,\frac{q\theta+x_q}{\tan\theta}\,y+1-(q\theta+x_q)^2=0\,.
\end{cases}
\end{equation}
From the first equation, $x$ is an increasing function of $\theta$ and $\lim_{\theta\to 0} x(\theta)=x_q$, $\lim_{\theta\to \theta_{\max}} x(\theta)=1\,$. The discriminant of the second equation of unknown $y$ is $\,\Delta(q,\theta)=4\Big(\big(\frac{q\theta+x_q}{\sin\theta}\big)^2-1\Big)$. In order to study its sign, we remark that the function
$g(\theta)=q\theta+x_q-\sin\theta$ is strictly convex on $(0,\theta_{\max})$ and we check that
$g'(\theta_0)=0$. As a consequence $\Delta(q,\theta)\geq 0$ and its only zero is $\theta_0$.
\medskip

\noindent
Now, we are ready to select a family of solutions $y$ of \eqref{yeqfirst} of the form $y=-\varphi_q(\theta)$ with $\varphi_q\in C^1(0,\theta_{\max})$ and $\lim_{\theta\to \theta_{\max}}\varphi_q(\theta)=0$. These properties guarantee that $\Gamma^r_q$ is the stable 
manifold of $z_+^0$ for the flow of $\nabla \Re\{h_q\}$, since
$\lim_{z\to 1} \Re\{h_q(z)\}=-\infty.$\medskip

\noindent
We define $\varphi_q$ as follows:
\begin{equation}\label{phieq}
\varphi_q(\theta)=\frac{q\theta+x_q}{\tan\theta}-\text{sgn}(\theta-\theta_0)\sqrt{\Big(\frac{q\theta+x_q}{\sin\theta}\Big)^2-1}\,.
\end{equation}
This function is real-analytic on $(0,\theta_{\max})\,$ with $\varphi'_q(\theta_0)=\frac{1+\sin\theta_0}{\cos \theta_0}$ and satisfies $\lim_{\theta\to \theta_{\max}}\varphi_q(\theta)=0$ as demanded. Obviously, one has $\varphi_q(\theta)\geq\frac{x_q}{\tan\theta}$ for $\theta\in (0,\theta_0)$, so $\lim_{\theta\to 0}\varphi_q(\theta)=+\infty\,$.
By elementary calculations, one finds that the function $\theta\to \frac{q\theta+x_q}{\tan\theta}$ has a negative derivative on $(0,\pi/2)$ while the function $\theta\to \frac{q\theta+x_q}{\sin\theta}$ is decreasing on $(0,\theta_0)$ and increasing on $(\theta_0,\pi/2)$. This proves that $\varphi_q$ is decreasing on $(0,\theta_{\max})$. Moreover, one can get the following uniform estimate for $q\in(0,1)$ and $\theta\in (0,\theta_{\max})\,$:\medskip
\begin{equation}\label{phieq'}
\varphi_q(\theta)\,\leq\,\frac{C}{\theta}\,,\ \ \vert \varphi'_q(\theta)\vert\,\leq\,\frac{C}{\theta^2}\,,\ \ \vert \varphi''_q(\theta)\vert\,\leq\,\frac{C}{\theta^3}\,.
\end{equation}
\noindent
Finally, for $\vert\theta\vert\in (0,\theta_{\max})$ we denote
\begin{equation}\label{parametrization}
z_q(\theta)=q\theta+\text{sgn}(\theta)x_q-i\varphi_q(\vert\theta\vert)\,,
\end{equation}
and the two components of $\Gamma_q$ are
\begin{equation}\label{components}
\Gamma^l_q=\{z_q(\theta)\,:\, \theta\in (-\theta_{\max},0)\}\ , \
\Gamma^r_q=\{z_q(\theta)\,:\, \theta\in (0,\theta_{\max})\}\,.
\end{equation}
As expected, the maximum of $\Re\{h_q\}$ on $\Gamma_q$ is attained at the two saddle points $z^0_\pm=z_q(\pm\theta_0)\,$. The path $\Gamma_q$ starts from $-1$ and ends at $1\,$. It is unbounded and has $\pm x_q+(-i\infty,0)$ as asymptotes corresponding to the limits $\theta\to 0\pm\,$. One can prove the following estimate, for two positive constants $A\,,\,a$ independent of $q\in (0,1)$ and $\theta\in (0,\theta_{\max})\,$:
\begin{equation}\label{estim h}
\Re\{h_q\circ z_q(\theta)\}\leq \,A-\frac{a(1-q)^{3/2}}{\theta}\,.
\end{equation}
Moreover, from \eqref{phieq'} one can get the following bound for all $q\in (0,1)$ and $\theta\in (0,\theta_{\max})\,$:
\begin{equation}\label{estim g}
\vert (g_\varepsilon\circ z_q)(\theta) z'_q(\theta)\vert\leq C\vert\theta\vert^{-4}\,.
\end{equation}
The integral $\int_{\Gamma_q}g_\varepsilon(z)e^{\rho h_q(z)}dz$ is thus absolutely convergent. One easily checks that the value of this integral is $I_{\varepsilon,\gamma,\rho}\,,$ by first considering bounded contours, then passing to the limit, as in subsection \ref{straight lines}.

\medskip

 We now treat the subcases.

\medskip

{\em Subcase (2.a): $ q_2\leq q\leq q_1$ .}
 In this compact region the calculations are easy, as indeed here $z^0_\pm$ stay away from each other and from the singular points $\pm1\,$. Related to this nondegeneracy, one checks that $\Re\{h_q''(z^0_\pm)(z'_q(\pm\theta_0))^2\}$ is negative and bounded away from $0$, moreover $\theta_0$ stays away from the endpoints of the interval $(0,\theta_{\max})$. Using \eqref{derq>1}-\eqref{phieq}-\eqref{phieq'}-\eqref{parametrization}, one thus finds positive constants $\delta\,$, $\kappa\,$ with $\delta < \arccos q_1$ and such that, for any $q\in [q_2,q_1]$:
 
 $\bullet$ For $\vert\theta\vert \in (\theta_0-\delta,\theta_{\max})$, $\Re\{h_q\circ z_q(\theta)\}\leq q\,\ln(2q/e) - \kappa (\vert\theta\vert-\theta_0)^2\,$;
 
 $\bullet$ For $\vert\theta\vert\in (0,\theta_0-\delta)$, $\Re\{h_q\circ z_q(\theta)\}\leq \,\ln(2q/e)-\kappa\delta^2\,$.
 
 \noindent
 Then, using \eqref{estim h}-\eqref{estim g} and denoting $\mu=a(1-q_1)^{3/2}\,,$ one gets an estimate of the form
$$j_{\varepsilon,k}(\rho)\leq C \vert k\vert^{-1/2}\left( \int_{\theta_0-\delta}^{\theta_{\max}} e^{-\rho\kappa \,(\theta-\theta_0)^2}d\theta +  e^{-(\rho-1)\kappa \,\delta^2}\,\int_0^{\theta_0-\delta} \theta^{-4}e^{-\mu/\theta}d\theta  \right)\leq C'\rho^{-1}$$
for all $ q_2\leq q\leq q_1$. Of course, $C'$ depends on the thresholds $0<q_2<q_1<1$ that will be fixed in the sequel.

\medskip

{\em Subcase (2.b): $q_1\leq q<1$.}  As in subcase {\em (1.b)}, the two saddle points $z_\pm^0$ are close to each other and the second derivative of $h_q$ at these points becomes small. To derive uniform estimates, we again construct a modified path $\tilde{\Gamma}_q$, but the details of the construction differ.\medskip

When $q\to 1-\,$, one has $\theta_{\max}\to 1\,$, $x_q\sim \theta_0^3/3\,$, $\theta_0\sim \sqrt{2(1-q)}\,$, $z^0_\pm+i\sim\pm\theta_0\,$ and $h''_q(z^0_\pm)\sim \theta_0e^{\pm i\pi/2}\,$; moreover $h^{(3)}_q(z^0_\pm)$ converges to $e^{i\pi/2}\,$ and $z_q(\theta_0+\delta)$ converges to $z_1(\delta)=-i+\frac{2}{\sqrt{3}}e^{i\pi/6}\delta+o(\delta)\,$ for any small positive constant $\delta$. So, denoting $\alpha_q^+=\arg\{z_q(\theta_0+\delta)-z_q(\theta_0)\}\,$ and $\alpha_q^-=\pi-\alpha_q^+\,$, arguing as in subcase {\em (1.b)} we may choose $\delta\,,\,\kappa>0$ such that for $1-q_1$ small enough, there hold $\vert\alpha^+_q-\pi/6\vert<\pi/12\,$, $\vert\alpha^-_q-5\pi/6\vert<\pi/12\,$,
$\,\Re\{h''_q(z^0_\pm)e^{2i\alpha^\pm_q}\}\leq -\kappa\sqrt{1-q}\,$ and $\,\Re\{h^{(3)}_q(z^0_\pm)e^{3i\alpha^\pm_q}\}\leq -\kappa\,$
and, for some $\kappa'>0$ and all $t\in [0,\delta]\,$,
\begin{equation}\label{boundpm2}
\Re\{h_q(z^0_\pm \pm qt + iqt\tan \alpha_q^+)\}\leq q\,\ln\left(\frac{2q}{e}\right)-\kappa' \left(\sqrt{q-1}\,t^2+\vert t\vert^3\right)\,.
\end{equation}
The segments of ``controlled descent" $[z^0_\pm,z_q(\pm(\theta_0+\delta))]$
are going to replace the two pieces of curves $z_q(\pm [\theta_0,\theta_0+\delta])$.\medskip

We also need to modify $z_q(\pm (0,\theta_0])\,$. To do so, we choose $\beta^+=-3\pi/5$ and $\beta^-=-2\pi/5\,$ and we check that for $1-q_1$ small enough and some $\kappa''>0\,$,
$\,\Re\{h''_q(z^0_\pm)e^{2i\beta_\pm}\}\leq -\kappa''\sqrt{1-q}\,$ and $\,\Re\{h^{(3)}_q(z^0_\pm)e^{3i\beta_\pm}\}\leq -\kappa''\,.$
As a consequence, taking $1-q_1$ even smaller if necessary, we find that for some $\kappa'''>0$ and any $t\in [0,\tan\theta_0]\,$,
\begin{equation}\label{boundpm3}
\Re\{h_q(z^0_\pm \mp qt(1 + i\tan \beta_\pm)\}\leq q\,\ln\left(\frac{2q}{e}\right)-\kappa'''\left(\sqrt{q-1}\,t^2+\vert t\vert^3\right)\,.
\end{equation}
One has $z^0_\pm \mp q\tan\theta_0(1 + i\tan \beta_\pm)=-i\frac{\cos(\frac{2\pi}{5})}{\cos(\frac{2\pi}{5}-\theta_0)}\,$, so \eqref{boundpm3} means that the two segments $\Big[-i\frac{\cos(\frac{2\pi}{5})}{\cos(\frac{2\pi}{5}-\theta_0)}\,,\,z^0_\pm\Big]$ are of ``controlled descent". They are going to replace the unbounded branches of steepest descent $z_q(\pm(0,\theta_0])\,$.\medskip

\noindent
We thus define a new oriented curve connecting $-1$ to $1\,$:
\begin{equation}\label{Gammatildelast}
\tilde{\Gamma}_q:=\tilde{\Gamma}_q^{(-3)}\cup\tilde{\Gamma}_q^{(-2)}\cup\tilde{\Gamma}_q^{(-1)}\cup\tilde{\Gamma}_q^{(1)}\cup\tilde{\Gamma}_q^{(2)}\cup\tilde{\Gamma}_q^{(3)}\,,
\end{equation}
where, as shown in figure \ref{fig6},\medskip

\begin{eqnarray*}
&\tilde{\Gamma}_q^{(-3)}&=\big\{z_q(\theta)\,:\, -\theta_{\max}< \theta\leq- \theta_0-\delta \big\}\,;\\
&\tilde{\Gamma}_q^{(-2)}&=[z_q(-\theta_0-\delta)\,,\,z^0_-]\,;\\
&\tilde{\Gamma}_q^{(-1)}&=\left[z^0_-\,,\,-i\frac{\cos(\frac{2\pi}{5})}{\cos(\frac{2\pi}{5}-\theta_0)}\right]\,;\\
&\tilde{\Gamma}_q^{(1)}&=\left[-i\frac{\cos(\frac{2\pi}{5})}{\cos(\frac{2\pi}{5}-\theta_0)}\,,\,z^0_+\right]\,;\\
&\tilde{\Gamma}_q^{(2)}&=[z^0_+,z_q(\theta_0+\delta)]\,;\\
&\tilde{\Gamma}_q^{(3)}&=\big\{z_q(\theta)\,:\, \theta_0+\delta\leq \theta < \theta_{\max} \big\}\,.
\end{eqnarray*}

\begin{figure}[htp]
\begin{center}
\includegraphics[width=12cm,height=8cm]{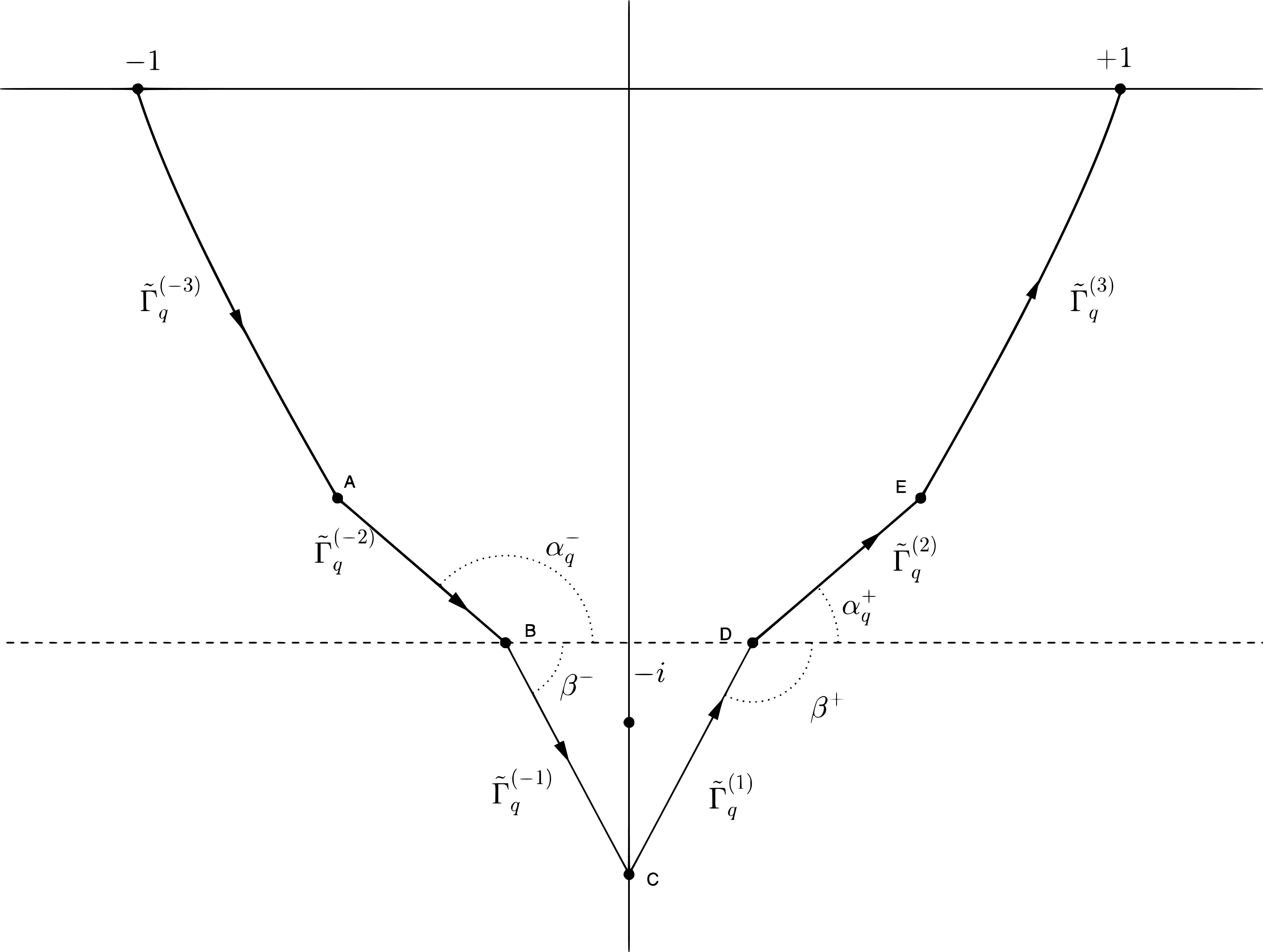}
\hspace{3mm}
\caption{\small{Picture of the curve $\tilde{\Gamma}_q$ defined by \eqref{Gammatildelast}.
}}
\label{fig6}
\end{center}
\end{figure}

{\em Highlighted points:}
\begin{itemize}

\item $A=z_q(-\theta_0-\delta)\,$, 
\item $B=z_-^0=z_q(-\theta_0)\,$, 
\item $C=-i\frac{\cos(\frac{2\pi}{5})}{\cos(\frac{2\pi}{5}-\theta_0)}\,$, 
\item $D=z^0_+=z_q(\theta_0)\,$, 
\item $E=z_q(\theta_0+\delta)\,$.
\end{itemize}

\medskip

\noindent
Then we follow the argument of subcase {\em (1.b)} and find estimates of the form
\begin{eqnarray*}
j_{\varepsilon,k}(\rho)&\leq& C \vert k\vert^{-1/2}\left( \int_\R e^{-\rho\min(\kappa',\kappa''') (\sqrt{q-1}\,t^2+|t|^3)}dt +  e^{-\rho\,\kappa' \delta^3}  \right)\\
&\leq &
C' \vert k\vert^{-\frac34}\left(\vert k\vert^{\frac13}+|\vert k\vert-\rho|\right)^{-\frac14}.
\end{eqnarray*}

{\em Subcase (2.c): $\displaystyle 0< q\leq q_2$ .} The main problem here is that for small $q$ the points $z^0_\pm$ are close to the singular points $\pm1$. Since $h_q$ and $\Gamma_q$ are symmetric with respect to the imaginary axis, we focus on $z^0_+$ and $\Gamma_q^r\,$. We note that $z^0_+-1\sim -iq $ when $q\to 0\,,$ so for a Taylor expansion of $\Re\{h_q(z)\}$ near $z^0_+=z_q(\theta_0)$ to have some validity, we need $\vert z-z^0_+\vert$ to be much smaller than $q$. But one can check that $z'_q(\theta_0)$ is close to $2i$ for $q$ small, so
this suggests to rescale the parametrization of $\Gamma_q$ around $\theta_0\,$.\medskip

After imposing the condition $q_2\leq 1/2$ we thus perform the local change of parameter $\theta(q,u)=\theta_0+qu\,\ (-u_0\leq u\leq u_0)\,$ where $u_0\in (0,1/4)$ is to be chosen later, and we denote $Z(q,u)=z_q(\theta(q,u))\,$. Then, denoting $X(q,u):=\Re\{Z(q,u)\}\,$ and $Y(q,u):=\Im\{Z(q,u)\}\,$, we have
\begin{equation*}
X(q,u)=\sqrt{1-q^{2}}+q^2 u,\quad Y(q,u)=-\varphi_q(\theta(q,u))\,.
\end{equation*}
We are now going to write expansions around the saddle point $z^0_+=Z(q,0)$ that keep track of the dependences on both $q$ and $u$.\medskip
 
 First of all, using the formulas $\sin(\theta(q,u))=\sqrt{1-q^2}\cos(qu)+q\sin(qu)$ and $\cos(\theta(q,u))=q\cos(qu)-\sqrt{1-q^2}\sin(qu)\,,$ then Taylor expanding $\cos(qu)$ and $\sin(qu)\,$ and injecting the expansions in \eqref{phieq}, we get after some straighforward calculations

\begin{equation}\label{yeq}
Y(q,u)=-q+q(\sqrt{1-q^{2}}+1)u+O\left(q^3 u^2\right)\,.
\end{equation}
We thus can write, after some further computations,
\begin{equation*}
|1-Z(q,u)^2|^2
=4q^2\left(1 -2(\sqrt{1-q^{2}}+1)u+4u^2+ O\left(q^2 u^2\right)\right)\,,
\end{equation*}
hence
\begin{equation*}
\Re\{h_q(Z(q,u))\}=Y(q,u)+q\ln|1-Z(q,u)^2|
=q\ln\left(\frac{2q}{e}\right) -2qu^2+ O\left(q|u|^3+q^2 u^2\right).
\end{equation*}
As a consequence, we obtain
\begin{equation}\label{estonR}
\Re\{h_q(Z(q,u))\}\leq q\ln\left(\frac{2q}{e}\right)-qu^2
\end{equation}
for 
$
|u|\leq u_0$ and $0<q\leq q_2\,$, if $u_0$ and $q_2$ are chosen small enough.\medskip

Now, from \eqref{phieq'}, there holds an estimate of the form 
$
\left|g_\varepsilon(Z(q,u))\frac{\partial Z}{\partial u}(q,u)\right|\leq C\,q
$
for $|u|\leq u_0$ and $0<q\leq q_2$. So, using
 \eqref{estonR}, we get
\begin{equation}\label{inter1}
\int_{z_q(\theta_0-qu_0,\theta_0+qu_0)}e^{\rho\,\Re\{h_q(z)\}}|g_\varepsilon(z)\vert\, \vert dz\vert \leq C q\left(\frac{2q}{e}\right)^{\gamma-1}(\gamma-1)^{-1/2}\,.
\end{equation}
On the other hand, for $\theta$ in $(0,\theta_{\max})\setminus [\theta_0-qu_0,\theta_0+qu_0]$, from \eqref{estonR} and the monotonocity of $\Re\{h_q\circ z_q(\theta)\}$ along gradient lines we have
$$e^{(\rho-1)\,\Re\{h_q\circ z_q(\theta)\}}\leq \left(\frac{2q}{e}\right)^{\gamma-1-q}e^{-(\gamma-1-q)\delta^2}$$
and we get from \eqref{phieq'}-\eqref{estim h} the estimate
$$e^{\Re\{h_q\circ z_q(\theta)\}}|g_\varepsilon(z_q(\theta))z_q'(\theta)| \leq C \theta^{-4}e^{-(1-q_2)^{3/2}a/\theta}\,.$$
Multipling these two bounds and integrating, we obtain
\begin{equation}\label{inter2}
\int_{\Gamma_q^r\setminus z_q(\theta_0-qu_0,\theta_0+qu_0)}e^{\rho\,\Re\{h_q(z)\}}|g_\varepsilon(z)\vert\, \vert dz\vert \leq C \left(\frac{2q}{e}\right)^{\gamma-1}e^{-\gamma\delta^2}\,.
\end{equation}
So, summing up the contributions of \eqref{inter1}, \eqref{inter2} and their analogues for the integrals on $z_q(-\theta_0-qu_0,-\theta_0+qu_0)$ and $\Gamma^l_q\setminus z_q(-\theta_0-qu_0,-\theta_0+qu_0)\,,$ we find
\begin{equation}\label{interfin}
\vert I_{\varepsilon,\gamma,\rho}\vert \leq C \left(\frac{2q}{e}\right)^{\gamma-1}\big(\,q(\gamma-1)^{-1/2}+e^{-\gamma\delta^2}\big)\,.
\end{equation}
Since we are in a sector such that $\rho\leq (\gamma+1)^2/2\,$, we have $e^{-\gamma\delta^2}=\mathcal O(q(\gamma-1)^{-1/2})\,$. So \eqref{interfin} gives
$$\vert I_{\varepsilon,\gamma,\rho}\vert \leq C' \left(\frac{2q}{e}\right)^{\gamma-1}\,q(\gamma-1)^{-1/2}\,$$
and finally, using \eqref{prefactor},
$$j_{\varepsilon,k}(\rho)\leq C''\rho^{-1}\,.$$
This ends the proof of Theorem \ref{esteigen}.


\medskip

\section{Strichartz Estimates}\label{strichsec}

Again, throughout this section the constant $C$ will be allowed to change from line to line; all that matters is that it can be taken independently of $k$, $R$ and $\nu$.

\subsection{A useful integral estimate}

The following integral bound on the generalized eigenfunctions is a consequence of Theorem \ref{esteigen}, and will play a crucial role in our proof of Theorem \ref{strichteo}. 

\begin{proposition}
Let $k\in\Z^*$, $\gamma=\sqrt{k^2-\nu^2}$ and let $\psi_k(r)$ be as in Theorem \ref{esteigen}. The following estimates hold
\begin{equation}\label{Fk}
\left(\int_R^{2R}|\psi_{k}(r)|^2 r^{2}dr\right)^{1/2}\leq C\times \begin{cases} R^{\gamma+1/2},\quad\; R\leq 1,\\
R^{1/2},\qquad\; R\geq 1\end{cases}
\end{equation}
and 
\begin{equation}\label{Fk'}
\left(\int_R^{2R}|\psi'_{k}(r)|^2 r^{2}dr\right)^{1/2}\leq C\times \begin{cases} R^{\gamma-1/2},\quad\;\;\; R\leq 1,\\
R^{1/2},\qquad\;\; \;R\geq 1,
\end{cases}
\end{equation}
where $C$ is a constant independent of $k$.
\end{proposition}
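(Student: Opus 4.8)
The plan is to derive both \eqref{Fk} and \eqref{Fk'} as straightforward consequences of the pointwise bounds \eqref{esteigformula} and \eqref{esteigformula'} from Theorem~\ref{esteigen}, by splitting the dyadic annulus $[R,2R]$ according to the three regimes appearing in those estimates (i.e.\ $r\lesssim |k|$, $r\sim|k|$, $r\gtrsim|k|$) and integrating the explicit majorants. Since \eqref{Fk'} is proved by exactly the same method as \eqref{Fk}, with $\gamma$ replaced by $\gamma-1$ in the small-$r$ regime (compare the exponents in \eqref{esteigformula'} versus \eqref{esteigformula}), I will only describe the argument for \eqref{Fk}; note that $\psi_k'$ differs from $\psi_k'-(\gamma-1)\rho^{-1}\psi_k$ by a term controlled by \eqref{esteigformula}, which is why the remark's estimate \eqref{esteigformula'} is available.

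First I would reduce to $R\geq 1$ and $R\leq 1$ separately and, within each, compare $R$ and $2R$ with $\max\{|k|/2,2\}$ and $2|k|$ to decide which of the three cases of \eqref{esteigformula} are active on $[R,2R]$. For the regime $r\leq\max\{|k|/2,2\}$ one uses $|\psi_k(r)|\leq C(\min\{r/2,1\})^{\gamma-1}e^{-D|k|}$; integrating $r^{2\gamma}dr$ over $[R,2R]$ gives $\lesssim R^{2\gamma+1}$ when $R\leq 1$ and, because of the exponential factor $e^{-2D|k|}$ together with $\gamma\sim|k|$, the contribution is negligible (indeed dominated by $R$) whenever $R\gtrsim 1$; here one uses that $e^{-D|k|}(|k|/2)^{\gamma}$ stays bounded, so that the worst case $R\sim|k|$ still yields $\lesssim R^{1/2}$. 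For the transition regime $r\sim|k|$ one has $|\psi_k(r)|\leq C|k|^{-3/4}(|\,|k|-r\,|+|k|^{1/3})^{-1/4}$; squaring, multiplying by $r^2\sim|k|^2$, and integrating over an interval of length $\lesssim|k|$ contained in $[|k|/2,2|k|]$, the singular factor $(|\,|k|-r\,|+|k|^{1/3})^{-1/2}$ integrates to $\lesssim|k|^{1/2}$, so the total is $\lesssim|k|^{-3/2}\cdot|k|^2\cdot|k|^{1/2}=|k|$, i.e.\ $\lesssim R$ as needed (here $R\sim|k|\gtrsim 1$). Finally for $r\geq 2|k|$ one has $|\psi_k(r)|\leq C r^{-1}$, so $\int_R^{2R}r^{-2}r^2\,dr\lesssim R$, again matching the claimed $R^{1/2}$ bound after taking square roots.

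The only mild subtlety — and the step I expect to require the most care — is the bookkeeping near the boundaries between the three regimes when $R$, $2R$, $|k|/2$, $2|k|$ are comparable, so that $[R,2R]$ overlaps two of the cases; there one simply splits $[R,2R]$ at the relevant threshold and bounds each piece by the estimate valid there, using $R\sim|k|$ to absorb powers of $|k|$. One should also check the low-frequency case $|k|=1$, where $\max\{|k|/2,2\}=2$, separately but identically. Collecting the three contributions and taking square roots yields \eqref{Fk}; repeating verbatim with \eqref{esteigformula'} in place of \eqref{esteigformula} (hence exponent $\gamma-1$ in the inner regime, giving $\int_R^{2R}r^{2\gamma-2}r^2\,dr\lesssim R^{2\gamma-1}$ for $R\leq 1$) yields \eqref{Fk'}, and the constants are manifestly independent of $k$ and $\nu$ because all constants in Theorem~\ref{esteigen} are.
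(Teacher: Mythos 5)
Your proposal is correct and takes essentially the same approach as the paper: split $R\leq 1$ from $R\geq 1$, and for $R\geq 1$ decompose $[R,2R]$ along the three regimes of the pointwise bounds \eqref{esteigformula}--\eqref{esteigformula'}, then integrate each majorant. The integrations you sketch (the power $r^{2\gamma}$ for $R\leq 1$, the $(|\,|k|-r\,|+|k|^{1/3})^{-1/2}$ singularity for $r\sim|k|$, the exponential and $r^{-2}$ tails otherwise) are exactly those carried out in the paper.
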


\begin{proof}
We separate the proof in two steps and we use the estimates \eqref{esteigformula}-\eqref{esteigformula'}.
\begin{itemize}
\item
If $R\leq 1$ we have
\begin{equation*}
\left(\int_R^{2R}|\psi_k(r)|^2 r^{2}dr\right)^{1/2}\leq C \left(\int_R^{2R}(r/2)^{2(\gamma-1)} r^{2}dr\right)^{1/2}\leq C R^{\gamma+1/2}
\end{equation*}
and
\begin{equation*}
\left(\int_R^{2R}|\psi'_{{k}}(r)|^2 r^{2}dr\right)^{1/2} \leq C \left(\int_R^{2R}(r/2)^{2(\gamma-2)} r^{2}dr\right)^{1/2}\leq C R^{\gamma-1/2}\,.
\end{equation*}
\item 
If $R\geq 1$, we deal separately with the following intervals:
$$[R,2R]=I_1+I_2+I_3$$
where 
$$I_1=[R,2R]\cap[0,\frac12|k|],\quad I_2=[R,2R]\cap[\frac12|k|, 2|k|], \quad I_3=[R,2R]\cap[2|k|,+\infty).$$
For what concerns $I_1$, we can assume that $2 R\leq |k|$, otherwise $I_1=\emptyset$. Then we have
\begin{equation*}
\begin{split}
\left(\int_{I_1}\big(|\psi_{{k}}(r)|^2+|\psi'_{k}(r)|^2\big) r^{2}dr\right)^{1/2}&\leq C \left(\int_R^{2R}e^{-2D|k|} r^{2}dr\right)^{1/2}\\
&\leq C \left(\int_R^{2R}e^{-2Dr} r^{2}dr\right)^{1/2}\\
&\leq CR^{1/2}\,.
\end{split}
\end{equation*}
For $I_2$, we can assume that $|k|/4\leq R\leq 2 |k|$, otherwise $I_2=\emptyset$. Then we get, with simple computations, 
\begin{equation*}
\begin{split}
\left(\int_{I_2}\big(|\psi_{{k}}(r)|^2+|\psi'_{k}(r)|^2\big) r^{2}dr\right)^{1/2}&\leq C|k|^{-\frac34}\left(\int_{\frac{|k|}2}^{2|k|}\big(||k|-r|+|k|^{\frac13}\big)^{-\frac12} r^{2}dr\right)^{1/2}\\&
\leq C|k|^{\frac14}\left(\int_{\frac{|k|}2}^{2|k|}||k|-r|^{-\frac12} dr\right)^{1/2}\\&
\leq C|k|^{\frac12}
\leq CR^{\frac12}\,.
\end{split}
\end{equation*}

Finally, for what concerns $I_3$, we have 
\begin{equation*}
\left(\int_{I_3}\big(|\psi_{{k}}(r)|^2+|\psi'_{k}(r)|^2\big) r^{2}dr\right)^{1/2}\leq C \left(\int_R^{2R}(r^{-1})^2r^{2}dr\right)^{1/2}
\leq CR^{1/2}
\end{equation*}
and this concludes the proof.
\end{itemize}
\end{proof}

\subsection{Proof of Theorem \ref{strichteo}}

As mentioned in Remark \ref{strichartzrange}, we are going to prove the Strichartz estimate on the ``endpoint board line", that is
\begin{equation}\label{stri}
\|\sol\|_{L^2_tL^q_{r^2dr}L^2_\omega}\leq
C
\|u_0\|_{\dot{H}^{s}},\qquad  s=1-\frac3q
\end{equation}
under the condition 
\begin{equation}\label{q-cond}
4<q<\frac{3}{1-\sqrt{1-\nu^2}}.
\end{equation}
Then, by interpolation with the standard $L^\infty H^s$ estimate, we will be able to cover the range given by \eqref{q-condbis}.
\medskip

Our starting point is formula \eqref{repsol}: with this representation, we write, for $q \geq2$, thanks to the $L^2$-orthogonality of spherical harmonics, 
\begin{eqnarray}\label{stim1}
\nonumber
\|\sol\|_{L^2_tL^q_{r^{2}dr}L^2_\omega}&=&\|\sum_{\substack{k\in \mathbb{Z}^*\\ m\in \mathcal{I}_k}}(e^{-it\mathcal{D}_{\nu, k}}f_{0,k,m})\cdot {\Xi}_{ k,m}
\|_{L^2_tL^q_{r^{2}dr}L^2_\omega}\\
&\leq&
\left(\sum_{\substack{k\in \mathbb{Z}^*\\ m\in \mathcal{I}_k}}\|  \PP^{-1}\left[e^{-it\rho\sigma_3}\big(\PP f_{0,k,m}\big)(\rho)\right]\|^2_{L^2_tL^q_{r^{2}dr}}\right)^{1/2}.
\end{eqnarray}
From now on, in order to simplify the notations as much as we can, we shall omit the dependence on $m$ in the sum. Also, we shall develop the computations only for the positive part of the spectrum, that is for the first component in representation \eqref{repsol}, as the estimates in the other case are completely analogous. 

We start by proving the following Strichartz estimates with unit frequency

\begin{proposition}\label{pro}
Let  
$4<q<\frac{3}{1-\sqrt{1-\nu^2}}$ and assume
$\text{supp}\big(\PP f_{0,k}\big)\subset [1,2]$
for all $k\in \mathbb{Z}^*$. Then
\begin{equation}\label{4.3}
\begin{split}
\|\sol\|_{L^2_tL^q_{r^{2}dr}L^2_\omega}\leq
C\| u_0\|_{ L^2_x}.
\end{split}
\end{equation}
\end{proposition}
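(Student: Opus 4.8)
\textbf{Plan of proof of Proposition~\ref{pro}.}
The strategy is the standard reduction of a Strichartz estimate to a decay estimate for the ``frequency-localized'' propagator on a fixed angular level, combined with a $TT^*$ argument. Writing $u_k(t,r)=\PP^{-1}\big[e^{-it\rho\sigma_3}(\PP f_{0,k})(\rho)\big]$, the inverse transform in \eqref{H-1} gives the explicit kernel representation
\begin{equation*}
u_k(t,r)=\int_1^2 H_k^*(\rho r)\,e^{-it\rho\sigma_3}\,(\PP f_{0,k})(\rho)\,\rho^2\,d\rho\,,
\end{equation*}
since the spectral support is in $[1,2]$. The operator $f_{0,k}\mapsto u_k$ thus factors as $\PP^{-1}\circ(\text{multiplier})\circ\PP$; because $\PP$ is an $L^2$-isometry (Proposition~\ref{properties}(1)), it suffices by duality/$TT^*$ to estimate the space-time operator $T_k g(t,r)=\int_1^2 H_k^*(\rho r)e^{-it\rho\sigma_3}g(\rho)\rho^2 d\rho$ as a map $L^2_\rho([1,2])\to L^2_tL^q_{r^2dr}$, or equivalently to bound the kernel of $T_kT_k^*$.

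\textbf{Key steps.} First, I would compute $T_kT_k^*$ explicitly: its kernel acting in the $(t,r)$ and $(s,\tilde r)$ variables is $K_k(t,r;s,\tilde r)=\int_1^2 H_k^*(\rho r)\,e^{-i(t-s)\rho\sigma_3}\,H_k(\rho\tilde r)\,\rho^4\,d\rho$ (using that $H_k^*$ is, up to the obvious transpose, the ``dual'' matrix to $H_k$). Second, I would prove a pointwise/dispersive bound on this kernel of the schematic form
\begin{equation*}
\Big|\int_1^2 H_k^*(\rho r)e^{-i(t-s)\rho\sigma_3}H_k(\rho\tilde r)\rho^4\,d\rho\Big|\ \lesssim\ (r\tilde r)^{-1/2}\,|t-s|^{-1/2}\,,
\end{equation*}
possibly with an extra harmless factor that is summable in $k$, together with the ``energy'' bound coming directly from the $L^2$-isometry. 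The dispersive bound is where Theorem~\ref{esteigen} enters: the entries of $H_k$ are the functions $\psi_k(\pm\rho r)$, and \eqref{esteigformula} (with $\gamma\ge 1$, $|k|\ge 1$) gives $|\psi_k(\rho r)|\lesssim (\rho r)^{-1/2}$ once $\rho r\gtrsim |k|$ and the stronger $(\min\{\rho r/2,1\})^{\gamma-1}e^{-D|k|}$ decay for $\rho r\lesssim |k|$; so on the integration range $\rho\in[1,2]$ one can split according to whether $r$ is large or small compared with $|k|$. For $r\gtrsim|k|$ one uses the $(\rho r)^{-1/2}$ bound and a non-stationary phase / van der Corput argument in $\rho$ on $[1,2]$ (the phase is essentially $\pm\rho(t-s)$, with the amplitude $\psi_k(\rho r)\psi_k(\rho\tilde r)\rho^4$ having controlled derivatives via \eqref{esteigformula'}) to gain the $|t-s|^{-1/2}$; for $r$ or $\tilde r$ small one uses the exponential-in-$|k|$ smallness to absorb everything and sum over $k$. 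Third, once the dispersive estimate $\|T_kT_k^* F\|_{L^2_tL^q_{r^2dr}}\lesssim \|F\|_{L^2_tL^{q'}_{r^2dr}}$ is in hand (interpolating the $|t-s|^{-1/2}$ decay kernel against the trivial $L^2\to L^2$ bound, in the spirit of the Keel--Tao machinery but here on the half-line in $r$ with the weight $r^2dr$, as in \cite{miao}), the Hardy--Littlewood--Sobolev inequality in $t$ and a fixed-time $L^2_r\to L^q_{r^2dr}$ Sobolev-type embedding close the argument, yielding \eqref{4.3} for the range of $q$ in \eqref{q-cond}.

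\textbf{Main obstacle.} The heart of the matter is the dispersive kernel bound: unlike the Bessel case, the product $\psi_k(\rho r)\psi_k(\rho\tilde r)$ is a product of confluent-hypergeometric-type functions with a nontrivial phase $e^{i(\rho r+\xi)}$ built in, so extracting a clean $|t-s|^{-1/2}$ decay uniformly in $k$ requires carefully combining the three regimes of \eqref{esteigformula} with a stationary-phase analysis in the $\rho$-integral over $[1,2]$, checking that the oscillation $e^{-i(t-s)\rho\sigma_3}$ together with the intrinsic oscillation of $\psi_k$ does not produce a stationary point in the relevant range, and that the amplitude derivatives are controlled by \eqref{esteigformula'}. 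The constraint $4<q<\frac{3}{1-\sqrt{1-\nu^2}}$ should emerge precisely from balancing the $r$-integrability near $r=0$ (where $\psi_k(\rho r)\sim (\rho r)^{\gamma-1}$ and $\gamma\ge\sqrt{1-\nu^2}$) against the $L^q_{r^2dr}$ norm, exactly as in the inverse-square potential analysis of \cite{miao}; verifying that this endpoint range is attained, and that all $k$-dependent constants sum, is the second delicate point.
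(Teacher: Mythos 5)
Your proposal takes a genuinely different route from the paper, and the route you chose has a gap that the paper's approach avoids.

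The paper does not use a $TT^*$ argument or a pointwise dispersive estimate at all. Because the proposition is formulated at the $L^2$-in-time endpoint, the authors exploit Plancherel (Hausdorff--Young for $p=2$) in the $t$ variable directly: since $\PP^{-1}\big[e^{-it\rho\sigma_3}g_k(\rho)\big](r)=\int_0^\infty e^{-it\rho\sigma_3}H_k^*(\rho r)g_k(\rho)\rho^2\,d\rho$ is a Fourier transform in $t$, the $L^2_t$ norm converts immediately into $\big\|\psi_k(r\rho)g_k(\rho)\big\|_{L^2_{\rho^2 d\rho}}$ at each fixed $r$, with no need to compute or bound any two-time kernel. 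The remaining spatial $L^q_{r^2dr}$ norm on each dyadic shell $[R,2R]$ is controlled, for $R\geq1$, by the one-dimensional embedding $H^1(\Omega)\hookrightarrow L^\infty(\Omega)$, and for $R\leq 1$ by $\dot H^{1/2-1/q}(\Omega)\hookrightarrow L^q(\Omega)$ plus interpolation between $L^2$ and $\dot H^1$; in both cases one uses the dyadic integral bounds \eqref{Fk}--\eqref{Fk'} on $\psi_k,\psi_k'$, which are the form in which Theorem \ref{esteigen} actually enters. Summing the resulting powers of $R$ over $R\in 2^{\mathbb Z}$ forces exactly the two conditions in \eqref{condK}, and these yield $4<q<3/(1-\sqrt{1-\nu^2})$. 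This is the strategy of \cite{miao}, not Keel--Tao.

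The gap in your plan is the proposed pointwise dispersive bound $\big|K_k(t,r;s,\tilde r)\big|\lesssim (r\tilde r)^{-1/2}|t-s|^{-1/2}$. The dispersion relation here is linear (wave/Dirac, not Schr\"odinger), so the frequency-localized one-dimensional kernel $\int_1^2 e^{i\rho((t-s)\mp r\mp\tilde r)}(\cdots)\,d\rho$ concentrates on the light cone $|t-s|\approx|r\pm\tilde r|$: it is $O(1)$ there and does not decay as $|t-s|\to\infty$ at fixed distance to the cone. Non-stationary phase / van der Corput in $\rho$ over $[1,2]$ yields decay in $|(t-s)\mp r\mp\tilde r|$, which is the wrong quantity, not in $|t-s|$. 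And even setting this aside, feeding a $|t-s|^{-1/2}$ kernel into the Keel--Tao interpolation to reach $p=2$ lands exactly on the forbidden endpoint of that machinery, so the range $p=2$, $4<q<\infty$ cannot be produced that way. The Plancherel-in-$t$ step is what makes the $L^2_t$ endpoint tractable, and it is precisely what is missing from your sketch; the $TT^*$ route would be the natural one for $p>2$, which the paper deliberately does not pursue (see Remark \ref{strichartzrange}).
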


\begin{remark}
The lower bound $4$ in the range of $q$ comes from the bound $\frac{2(n-1)}{n-2}$ in the cone restriction estimate when $n=3$, see Tao \cite{Tao}; notice that we retrieve the same bound from below through our strategy.
\end{remark}

\begin{proof}

Let us denote $\PP f_{0,k}=g_{k}$; then, due to \eqref{stim1}, it suffices to show
\begin{equation}\label{4.10}
\begin{split}
\sum_{k\in \mathbb{Z}^*}\|  \PP^{-1}\left[e^{-it\rho\sigma_3}g_{k}(\rho)\right]\|^2_{L^2_tL^q_{r^{2}dr}}\leq
C\|u_0\|^2_{L^2_x}.
\end{split}
\end{equation}
Using the dyadic decomposition, we have by $\ell^{2}\hookrightarrow
\ell^{q}\,(q\geq2)$
\begin{equation}\label{4.11}
\begin{split}
&\sum_{k\in \mathbb{Z}^*}\|  \PP^{-1}\left[e^{-it\rho\sigma_3}g_{k}(\rho)\right]\|^2_{L^2_tL^q_{r^{2}dr}}\\&=
\sum_{k\in \mathbb{Z}^*}\Big\|\Big(\sum_{R\in2^{\Z}}\Big\|\PP^{-1}\big[e^{-it\rho\sigma_3}g_k(\rho)\big](r)\Big\|^q_{L^q_{r^{2}dr}([R,2R])}\Big)^{\frac1q}\Big\|^2_{L^2_t}
\\&\leq
\sum_{R\in2^{\Z}}\sum_{k\in\Z^*}\Big\|\PP^{-1}\big[e^{-it\rho\sigma_3}g_k(\rho)\big](r)\Big\|^2_{L^2_tL^q_{r^{2}dr}([R,2R])}.
\end{split}
\end{equation}

We need the following result

\begin{lemma}\label{Hankel2}Let $q\geq2$ and $k\in \Z^*$.
Suppose $\text{supp}(g_k)\subset I:=[1,2]$.  Then
\begin{equation}\label{est:hankel}
\begin{split}
&\Big\|\PP^{-1}\big[e^{-it\rho\sigma_3}g_k(\rho)\big](r)\Big\|_{L^2_tL^q_{r^{2}dr}([R,2R])}
\\&\leq C
\|g_k(\rho)\|_{L^2_{\rho^2 d\rho}}\times \begin{cases}R^{\sqrt{k^2-\nu^2}-1+\frac{3}q},\quad \;R\leq 1 \\
R^{\frac{2}q-\frac12}\qquad\qquad R\geq1
\end{cases}
\end{split}
\end{equation}
where $R\in 2^{\Z}$ and $C$ is a constant independent of $R$ and $k$.
\end{lemma}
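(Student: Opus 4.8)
The plan is to estimate the operator $g_k\mapsto\mathcal P_k^{-1}[e^{it\rho\sigma_3}g_k(\rho)](r)$ on the spatial shell $r\in[R,2R]$ by writing it out explicitly via the inverse relativistic Hankel transform \eqref{H-1}. Since $\mathrm{supp}(g_k)\subset[1,2]$, the $\rho$-integral is over a compact set, and a Cauchy--Schwarz in $\rho$ (using $\rho\sim1$ there) reduces matters to controlling
\[
\int_1^2\big|\psi_k(\pm\rho r)\big|^2\,d\rho\qquad\text{and}\qquad\int_1^2\big|\psi_k(\pm\rho r)\big|^2\rho^2\,d\rho,
\]
uniformly in $r\in[R,2R]$; the $L^2_t$ norm is handled for free by Plancherel in $t$ since $e^{it\rho\sigma_3}$ is a unitary multiplier and the $\rho$-support is bounded (there is no oscillatory gain to extract, we are on the board line $p=2$). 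So the real content is: estimate $\|\psi_k\|_{L^q([R,2R],r^2dr)}$-type quantities in terms of the already-proven pointwise bounds \eqref{esteigformula}.

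First I would observe that after the change of variables $\sigma=\rho r$ the inner quantity becomes, up to constants, $r^{-1}\int_r^{2r}|\psi_k(\pm\sigma)|^2\,d\sigma$ (and similarly with a $\sigma^2$ weight), i.e.\ exactly the kind of shell integral controlled by the Proposition with estimates \eqref{Fk}--\eqref{Fk'}. Then I would feed in the pointwise bound \eqref{esteigformula}: for $R\le1$ the relevant $\sigma$-range sits in the regime $\sigma\lesssim\max(|k|/2,2)$, where $|\psi_k(\sigma)|\lesssim(\sigma/2)^{\gamma-1}e^{-D|k|}\lesssim\sigma^{\gamma-1}$, giving a bound $\lesssim R^{\gamma-1}$ for the sup of $|\psi_k|$ on the shell and hence, combined with the $L^q_{r^2dr}([R,2R])$ norm of the constant function (which is $\sim R^{3/q}$), the claimed $R^{\gamma-1+3/q}$. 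For $R\ge1$ one uses the other two branches of \eqref{esteigformula}: $|\psi_k(r)|\lesssim r^{-1}$ in the far region and $|\psi_k(r)|\lesssim|k|^{-3/4}(|\,|k|-r|+|k|^{1/3})^{-1/4}$ in the transition region; in each case $\|\psi_k\|_{L^\infty([R,2R])}\lesssim R^{-1/2}$ after taking the worst of the three cases (this is precisely the bound implicit in \eqref{Fk}, where the shell $L^2_{r^2dr}$ norm is $\lesssim R^{1/2}$, i.e.\ $L^\infty\lesssim R^{-1}$ times the volume factor $R^{3/2}$ — giving $L^\infty\lesssim R^{-1/2}$). Multiplying by the $L^q_{r^2dr}([R,2R])$ volume $\sim R^{3/q}$ of a constant yields $R^{3/q-1/2}\cdot R^{?}$; one must be slightly careful here because $L^q$ of a non-constant profile is not just $\|\cdot\|_{L^\infty}$ times volume, but the transition-region profile $(|\,|k|-r|+|k|^{1/3})^{-1/4}$ is $L^q$-integrable in $r$ for every finite $q$ (the singularity exponent $q/4$ being integrable), and a direct computation of that $L^q$ norm on the shell, as in the $I_2$ estimate in the proof of the Proposition, produces exactly $R^{2/q-1/2}$ after accounting for the $r^2dr$ measure. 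Patching the three $r$-regions gives $R^{2/q-1/2}$ for $R\ge1$.

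The step I expect to be the main obstacle is the transition-region ($||k|-r|\lesssim|k|$) computation for $R\ge1$: there $|\psi_k|$ is genuinely not bounded by $R^{-1/2}$ pointwise — it can be as large as $|k|^{-3/4}\cdot|k|^{-1/12}=|k|^{-5/6}$ only near $r=|k|$, but away from the diagonal it is larger — so one cannot simply pull out an $L^\infty$ bound and multiply by the volume; instead one must compute $\|\,(|\,|k|-r|+|k|^{1/3})^{-1/4}\|_{L^q_{r^2dr}([R,2R])}$ honestly, splitting the shell according to whether $r$ is within $|k|^{1/3}$ of $|k|$ or not, exactly mirroring the $I_2$ argument in the proof of the Proposition above, and then check that the resulting power of $R$ (using $R\sim|k|$ on the non-empty part of the intersection) combines with the prefactor $|k|^{-3/4}$ and the measure to give precisely $R^{2/q-1/2}$, and crucially that this requires $q<\infty$ only (the lower bound $q>4$ is not needed for this Lemma, it enters later in the cone-restriction step). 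A secondary, more bookkeeping-type obstacle is making sure the $\sigma_3$ matrix structure and the simultaneous appearance of $\psi_k(\rho r)$ and $\psi_k(-\rho r)$ in $H_k^*$ do not cause trouble — but since $j_{0,k}(\rho)=|\psi_k(\rho)|$ is defined for all $\rho\in\R\setminus\{0\}$ and \eqref{esteigformula} is stated for $|\rho|$, the negative-energy component obeys the identical bound and the vector/matrix norms only cost harmless dimensional constants.
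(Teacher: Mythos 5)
There is a genuine gap in your proposal, and it is centred on the order of the norms. After Plancherel (or Hausdorff--Young) in $t$, applied pointwise in $r$, what you control is
\[
\Big\| \,\big\|F(t,r)\big\|_{L^2_t}\,\Big\|_{L^q_{r^2dr}([R,2R])}=\|F\|_{L^q_{r^2dr}L^2_t}\,,
\]
and this is the \emph{smaller} of the two mixed norms: Minkowski's integral inequality gives
$\|F\|_{L^q_r L^2_t}\le \|F\|_{L^2_t L^q_r}$ whenever $q\ge 2$, so a bound on the left does not produce a bound on the right. (The earlier step --- a Cauchy--Schwarz in $\rho$ over $[1,2]$ --- is worse still, since it eliminates the $t$-dependence altogether and makes the subsequent $L^2_t(\R)$ norm infinite.) Consequently, estimating $\sup_r\vert\psi_k\vert$ or $\|\psi_k\|_{L^q_{r^2dr}([R,2R])}$ from the pointwise bounds \eqref{esteigformula} and then ``handling the $L^2_t$ norm by Plancherel'' does not close the argument: those two operations cannot be decoupled in the order required by the statement.

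The missing ingredient --- and the reason the shell estimates \eqref{Fk} \emph{and} \eqref{Fk'} both appear --- is a Sobolev (Gagliardo--Nirenberg) step in the radial variable, applied \emph{before} any integration in $t$. Concretely, for fixed $t$ and $R\ge1$ one uses $H^1([R,2R])\hookrightarrow L^\infty([R,2R])$ to bound $\|F(t,\cdot)\|_{L^\infty_r}$ by products of $\|F(t,\cdot)\|_{L^2_r}$ and $\|\partial_r F(t,\cdot)\|_{L^2_r}$; for $R\le1$ one uses $\dot H^{1/2-1/q}\hookrightarrow L^q$ and interpolation. The $r$-derivative brings $\psi'_k$ into play, which is exactly where \eqref{Fk'} is needed. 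Only then does one take $L^2_t$: Hölder in $t$ distributes the norm over the product, each factor is now of $L^2_tL^2_r$ type, and \emph{there} Plancherel applies. Your proposal has no analogue of the $\partial_r F$ estimate, and therefore no lever with which to turn an $L^2_r$-type bound into an $L^q_r$-type bound inside the $L^2_t$ norm. A secondary, smaller issue: you read off $\|\psi_k\|_{L^\infty([R,2R])}\lesssim R^{-1/2}$ as ``implicit in \eqref{Fk}'', but an $L^2$ shell bound never controls an $L^\infty$ bound; it is true here only because of the separate pointwise estimate \eqref{esteigformula}, so the reasoning in the parenthetical is circular even if the conclusion is correct.
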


\begin{proof} Recalling that $\sigma_3$ is given by \eqref{sigma}, to simplify the notations we replace $e^{-it\rho\sigma_3}$ by $e^{it\rho}$ and we forget the fact that $\psi_k(r\rho)$ and $g_k(\rho)$ are vectors, we consider them as scalars. It is to be understood that we work with their coordinates. Also, let us stress the fact that, as $g_k$ is supported in $[1,2]$, all norms of the form $\|g_k\|_{L^2_{\rho^\alpha d\rho}}$ are equivalent; this fact will be used several times throughout the proof.
We  first consider the case $R\geq 1$. We need to prove the following estimates for a fixed $k\in\Z^*$:
\begin{equation}\label{p-2}
\begin{split}
\Big\|\int_0^\infty e^{it\rho} {\psi}_{k}(
r\rho)g_{k}(\rho) \rho^{2}d\rho\Big\|_{L^2_tL^2_{r^{2}dr}([R,2R])}&
\leq C R^{\frac1 2}\| g_{k}(\rho)\|_{L^{2}_{\rho^2 d\rho}},
\end{split}
\end{equation}
and
\begin{equation}\label{p-in}
\begin{split}
\Big\|\int_0^\infty e^{it\rho} {\psi}_{k}(
r\rho)g_{k}(\rho)&\rho^{2}d\rho\Big\|_{L^2_tL^\infty_{r^{2}dr}([R,2R])}
\leq C R^{-\frac12}\| g_{k}(\rho)\|_{L^{2}_{\rho^2 d\rho}}.
\end{split}
\end{equation}
 To prove \eqref{p-2}, we use the Plancherel inequality in the time variable $t$ and \eqref{Fk} to obtain 
\begin{equation*}
\begin{split}
\text{L.H.S of }\quad\eqref{p-2}\lesssim \Big\| \big\|\psi_k( r\rho)
g_{k}(\rho) \big\|_{L^{2}_{ \rho^{4} d\rho}}
\Big\|_{L^2_{r^{2}dr}([R,2R])} \leq C R^{1/2}\|g_{k}(\rho)\|_{L^{2}_{\rho^2 d\rho}}.
\end{split}
\end{equation*}

We now prove \eqref{p-in}, which is the same as 
\begin{equation}
\begin{split}
\Big\|\int_0^\infty e^{it\rho} {\psi}_{k}(
r\rho)g_{k}(\rho)&\rho^{2}d\rho\Big\|_{L^2_tL^\infty_{dr}([R,2R])}
\leq C R^{-\frac12}\| g_{k}(\rho)\rho^2 \|_{L^{2}_{\rho^2 d\rho}}.
\end{split}
\end{equation}

By the Sobolev embedding $H^1(\Omega)\hookrightarrow
L^\infty(\Omega)$ with $\Omega=[R,2R]$ and \eqref{p-2}, it suffices to show
\begin{equation}\label{2-in}
\begin{split}
\Big\|\int_0^\infty e^{it\rho}  \psi^{\prime}_{k}(r\rho)
g_{k}(\rho) \rho^3 d\rho
\Big\|_{L^2_tL^2_{dr}([R,2R])}\leq C R^{-\frac12}
\|g_{k}(\rho)\|_{L^{2}_{\rho^2 d\rho}}.
\end{split}
\end{equation}
As in the above, by applying the Plancherel Theorem in $t$ and \eqref{Fk'} we obtain
\begin{equation*}
\begin{split}
\text{L.H.S of }~\eqref{2-in}\lesssim \Big\| \big\|\psi'_{k}( r\rho)
g_{k}(\rho) \rho^3 \big\|_{L^{2}_{d\rho}}
\Big\|_{L^2_{dr}([R,2R])}\leq C R^{-1/2} \|g_{k}(\rho)\|_{L^{2}_{\rho^2 d\rho}}.
\end{split}
\end{equation*}

Secondly, we consider the case $R\leq 1$.
By the Sobolev embedding $H^{\frac12-\frac1q}(\Omega)\hookrightarrow
L^q(\Omega)$ and interpolation, we have
\begin{equation}\label{3.14}
\begin{split}
&\Big\|\int_0^\infty e^{it\rho} {\psi}_{k}(
r\rho)g_{k}(\rho)
\rho^{2}d\rho\Big\|_{L^2_tL^q_{dr}([R,2R])}
\\&\leq \Big\|\int_0^\infty e^{it\rho} {\psi}_{k}(
r\rho)g_{k}(\rho)
\rho^{2}d\rho\Big\|^{\frac12+\frac1q}_{L^2_tL^{2}([R,2R])} \\&\qquad\times\Big\|\int_0^\infty e^{it\rho} {\psi}_{k}(
r\rho)g_{k}(\rho)
\rho^{2}d\rho\Big\|^{\frac12-\frac1q}_{L^2_t H^{1}([R,2R])}
\\&\leq C R^{\gamma-\frac1 2-(\frac12-\frac{1}q)}\|
g_{k}(\rho)\|_{L^{2}_{\rho^2 d\rho}},
\end{split}
\end{equation}
since from \eqref{Fk}we have 
\begin{equation*}
\begin{split}
\Big\|\int_0^\infty e^{it\rho} {\psi}_{k}(
r\rho)g_{k}(\rho)
&\rho^{2}d\rho\Big\|_{L^2_tL^2_{dr}([R,2R])}=\Big\| \big\|\psi_k( r\rho)
g_{k}(\rho) \big\|_{L^{2}_\rho}\Big\|_{L^2_{dr}([R,2R])}\\&
\leq C{ R^{\gamma-\frac1 2}}\| g_{k}(\rho)\|_{L^{2}_{\rho^2 d\rho}},
\end{split}
\end{equation*}
and from \eqref{Fk'} we have
\begin{equation*}
\begin{split}
\Big\|\int_0^\infty e^{it\rho} {\psi}_{k}'(
r\rho)g_{k}(\rho)
&\rho^{3} d\rho\Big\|_{L^2_tL^2_{dr}([R,2R])}=\Big\| \big\|\psi'_{k}( r\rho)
g_{k}(\rho)\big\|_{L^{2}_\rho}\Big\|_{L^2_{dr}([R,2R])}\\&
\leq C{R^{\gamma-\frac3 2}}\| g_{k}(\rho)\|_{L^{2}_{\rho^2 d\rho}}.
\end{split}
\end{equation*}
We have thus obtained
\begin{equation}
\begin{split}
\Big\|\int_0^\infty e^{it\rho} {\psi}_{k}(
r\rho)g_{k}(\rho)
\rho^{2}d\rho\Big\|_{L^2_tL^q_{r^{2}dr}([R,2R])}
\leq C R^{\gamma-1+\frac{3}q}\|
g_{k}(\rho)\|_{L^{2}_{\rho^2 d\rho}},
\end{split}
\end{equation}
and this concludes the proof of Lemma \ref{Hankel2}.

\end{proof}

We are now in position for proving Proposition \ref{pro}. Thanks to estimate \eqref{est:hankel}
 we get
\begin{equation}\label{4.12}
\begin{split}
&\sum_{R\in2^{\Z}}\sum_{k\in\Z^*}\Big\|\PP^{-1}\big[e^{it\rho\sigma_3}g_k(\rho)\big](r)\Big\|^2_{L^2_tL^q_{r^{2}dr}([R,2R])}
\\&\leq C
\sum_{k\in\Z^*}\Big(\sum_{R\in2^{\Z}:R\leq 1}{R^{2\big(\sqrt{k^2-\nu^2}-1+\frac{3}q\big)}}+\sum_{R\in2^{\Z}: R\geq1} 2R^{2(\frac{2}q-\frac{1}2)}\Big)\|
g_k(\rho)\|^2_{L^{2}_{\rho^2 d\rho}}\\&\leq C
\sum_{k\in\Z^*}\|
g_k(\rho)\|^2_{L^{2}_{\rho^2 d\rho}}, \qquad 
\end{split}
\end{equation}
provided that, for any $k\in\mathbb{Z}^*$,
\begin{equation}\label{condK}
\sqrt{k^2-\nu^2}-1+\frac{3}q>0\quad\hbox{and} \quad\frac{2}q-\frac{1}2<0\,,
\end{equation}
that is, for general data, provided 
$$4<q<\frac{3}{1-\sqrt{ 1-\nu^2}}.$$
We stress the fact that the first condition in \eqref{condK} is automatically satisfied if $|k|\geq2$, that is, if the initial datum does not have a component in the ``first partial wave subspaces", {\it i.e.} the ones corresponding to the case $k=\pm1$. Thus, in this case, there is no upper bound on $q$.\smallskip

\noindent
We recall that by definition, $g_k=\PP f_{0,k}$.
As $\PP$ is isometric on $L^2$, \begin{equation*}
\begin{split}
\sum_{k\in\Z^*}\|
\big(\PP f_{0,k}\big)(\rho)\|^2_{L^2_{\rho^{2}d\rho}}=
\sum_{k\in\Z^*}\|
f_{0,k}(r)\|^2_{L^2_{r^{2}dr}}=\|u_0\|^2_{L^2_x}.
\end{split}
\end{equation*}
This concludes the proof of Proposition \ref{pro}.
\end{proof}

We are finally in position for proving Theorem \ref{strichteo}. Let $R$ and $N$ be dyadic numbers (i.e. let $R$ and $N$ be in $2^{\Z}$) and let $\beta\in C_c^\infty([1,2])$; by making a dyadic decomposition, and using a scaling argument we can write, starting from \eqref{stim1},
\begin{equation*}
\begin{split}
&\sum_{\substack{k\in \mathbb{Z}^*\\ m\in \mathcal{I}_k}}\|  \PP^{-1}\left[e^{-it\rho\sigma_3}\big(\PP f_{0,k,m}\big)(\rho)\right]\|^2_{L^2_tL^q_{r^{2}dr}}\\ &\leq C\sum_{k\in\Z^*}\Big\|\sum_{N\in2^\Z}\mathcal{P}_{k}^{-1}\left[e^{-it\rho\sigma_3} \beta(\frac{\rho}N)[\mathcal{P}_{k}f_{k}](\rho)\right]\Big\|^2_{L^2_tL^q_{r^{2}dr}} \\
&\leq C\sum_{k\in\Z^*}\sum_{R\in2^\Z}\Big\|  \sum_{N\in2^\Z}\mathcal{P}_{k}^{-1}\left[e^{-it\rho\sigma_3} \beta(\frac{\rho}N)g_{k}(\rho)\right]\Big\|^2_{L^2_tL^q_{r^{2}dr}([R,2R])} \\
&\leq C\sum_{k\in\Z^*}\sum_{R\in2^\Z} \Big( \sum_{N\in2^\Z}\Big\| \mathcal{P}_{k}^{-1}\left[e^{-it\rho\sigma_3} \beta(\frac{\rho}N)g_{k}(\rho)\right]\Big\|_{L^2_tL^q_{r^{2}dr}([R,2R])}\Big)^2\\
&= \sum_{k\in\Z^*}\sum_{R\in2^\Z} \Big( \sum_{N\in2^\Z}N^{3(1-\frac1q)-\frac12}\Big\| \mathcal{P}_{k}^{-1}\left[e^{-it\rho\sigma_3} \beta(\rho)g_{k}(N\rho)\right]\Big\|_{L^2_tL^q_{r^{2}dr}([NR,2NR])}\Big)^2.
\end{split}
\end{equation*}
At this point we are in position to exploit Proposition \ref{pro} (and then re-scale again): we can thus estimate further with
 \begin{equation}
\begin{split}
&\leq \sum_{k\in\Z^*}\sum_{R\in2^\Z}  \Big(\sum_{N\in2^\Z}N^{3(\frac12 -\frac 3q)-\frac12} Q(NR)
\|\beta(\rho/N)g_{k}(\rho)\|_{L^2_{\rho^{2}d\rho}}
\Big)^2\\
\end{split}
\end{equation}
where 
\begin{equation*}
Q(NR)=
\begin{cases}{(NR)^{\gamma-1+\frac{3}q},\qquad\;\;\; NR\leq 1}, \\
(NR)^{\frac{2}q-\frac12}\qquad\qquad NR\geq 1.
\end{cases}
\end{equation*}
Due to \eqref{q-cond} and $\gamma=\sqrt{k^2-\nu^2}$,
we note that
\begin{equation}\label{ST}
\sup_{R} \sum_{N\in2^\Z} Q(NR) <\infty,\quad \sup_{N} \sum_{R\in2^\Z} Q(NR) <\infty.
\end{equation}
Let 
$A_{N,k}=N^{3(\frac12 -\frac 1q)-\frac12}
\|g_{k}(\rho)\beta(\rho/N)\|_{L^2_{\rho^{2}d\rho}(\R^+)}$,
we use the Schur test Lemma with \eqref{ST} in the following way: 
\begin{equation*}
\begin{split}
  \left(\sum_{R\in2^\Z}  \Big(\sum_{N\in2^\Z} Q(NR) A_{N,k} \Big)^2\right)^{1/2}
 =\sup_{\|B_R\|_{\ell^2}\leq 1}\sum_{R\in2^\Z}  \sum_{N\in2^\Z} Q(NR)A_{N,k} B_R
 \end{split}
\end{equation*}
which is bounded by
 \begin{equation*}
\begin{split}
 &\leq C  \left(\sum_{R\in2^\Z}\sum_{N\in2^\Z} Q(NR)|A_{N,k}|^2\right)^{1/2} \left(\sum_{R\in2^\Z}\sum_{N\in2^\Z} Q(NR)|B_R|^2\right)^{1/2}
 \\&\leq C\big( \sup_{R} \sum_{N\in2^\Z} Q(NR) \sup_{N} \sum_{R\in2^\Z} Q(NR)\big)^{1/2} \left(\sum_{N\in2^\Z}|A_{N,k}|^2\right)^{1/2} \left(\sum_{R\in2^\Z}|B_R|^2\right)^{1/2}
 \\&\leq C\left(\sum_{N\in2^\Z}|A_{N,k}|^2\right)^{1/2} .
\end{split}
\end{equation*}
We have thus obtained, recalling the definition of $g_k=\PP f_{0,k}$, and the properties of our Hankel transform given in Proposition \ref{properties},
\begin{eqnarray*}
\|e^{-it\mathcal{D}_\nu}u_{0}\|_{L^2_t(\R;L^q_{r^2dr}L^2_\omega)}^2 &\leq&\sum_{k\in\Z^*}\sum_{R\in2^\Z}  \Big(\sum_{N\in2^\Z} Q(NR) A_{N,k} \Big)^2
\\
&\leq& C\sum_{k\in\Z^*}\sum_{N\in2^\Z}|A_{N,k}|^2\\
&=& C\sum_{k\in\Z^*}\sum_{N\in2^\Z} N^{2(3(\frac12 -\frac 1q)-\frac12)}
\|g_{k}(\rho)\beta(\rho/N)\|^2_{L^2_{\rho^{2}d\rho}(\R^+)}\\
&=& C\sum_{k\in\Z^*}\sum_{N\in2^\Z} N^{2(3(\frac12 -\frac 1q)-\frac12)}
\|\PP f_{0,k}(\rho) \beta(\rho/N)\|^2_{L^2_{\rho^{2} d\rho}(\R^+)}
\\
&=& C\sum_{k\in\Z^*}\sum_{N\in2^\Z} N^{2(3(\frac12 -\frac 1q)-\frac12)}
\| \beta({|\mathcal{D}_\nu|}/N)f_{0,k}(r)\|^2_{L^2_{r^{2} dr}(\R^+)}\\&
\leq& C\||\mathcal{D}_\nu|^{3(\frac12 -\frac 1q)-\frac12)} u_0\|_{L^2}^2,
\\&
\leq& C\| u_0\|_{\dot H^{1-\frac3q}}^2,
\end{eqnarray*}
where in the last inequality we have used Lemma \ref{confnorm} to estimate the fractional powers of the Dirac-Coulomb operator with standard fractional derivatives, and the proof of Theorem \ref{strichteo} is concluded. $\square$

\bigskip\bigskip\bigskip\bigskip

{\bf Acknowledgments.}  
The authors are grateful to Konstantin Merz and Marta Strani for providing helpful comments. FC acknowledges support from the University of Padova STARS project ``Linear and Nonlinear Problems for the Dirac Equation" (LANPDE); JZ acknowledges support from National Natural Science Foundation of China (12171031,11771041, 11831004). The authors are also grateful to the anonymous referees for their careful reading of the paper.

\end{document}